\newtheorem{theorem}{Theorem}[section]
\newtheorem{lemma}[theorem]{Lemma}
\newtheorem{proposition}{Proposition}[section]
\newtheorem{remark}[theorem]{Remark}
\numberwithin{equation}{section}
\def\RR{I\!\!R}
\def\NN{I\!\! N}
\begin{document}

\begin{frontmatter}

\title{Descret Solution for a Nonlinear Parabolic Equations with Diffusion Terms in Museilak-Spaces}

\author[label1]{A. Aberqi}
\address[label1]{Sidi Mohammed Ben Abdellah University, National School of Applied Sciences,
Laboratory LAMA, Fez, Morocco.}
\ead{aberqi$\_$ahmed@yahoo.fr}
\author[label2]{M. Elmassoudi}
\address[label2]{Sidi Mohammed Ben Abdellah University, Faculty of Sciences Dhar El Mahraz,
Laboratory LAMA, Department of Mathematics, B.P 1796 Atlas Fez, Morocco.}
\ead{elmassoudi09@gmail.com}
\author[label3]{M. Hammoumi}
\address[label3]{Sidi Mohammed Ben Abdellah University, Poly-disciplinary Faculty of Taza,
Laboratory LSI, P.O. Box 1223 Taza Gare, Morocco.}
\ead{hammoumi.mohamed09@gmail.com}

\begin{abstract}
\par In this paper, we study a  class of  nonlinear evolution equation with damping arising in fluid dynamics and rheology.
The nonlinear term is monotone and possesses a convex potential but exhibits non-standard growth. The appropriate functional framework for such equations is the modulary Museilak-Spaces. We prove the existence and uniqueness of a weak solution using an approximation approach by combining an internal approximation with the backward Euler scheme, also a priori error estimate for the temporal semi-discretization is given.
\end{abstract}

\begin{keyword}
%% keywords here, in the form: keyword \sep keyword
Descret Solution, Parabolic Equation, Weak solution, Museilak-Spaces, Non-standard growth, Backward Euler scheme, Intern approximations.

% MSC codes here, in the form: \MSC code \sep code
  \MSC[2010] 49J35 \sep 35A15 \sep 35S15
\end{keyword}

\end{frontmatter}

\section{Introduction}
\par In the present paper, We are concerned with the approximation of the initial boundary value problem for a non-linear parabolic equation that reads,
\begin{equation}\label{eq00000}
    \left\{\begin{array}{ll}
      \frac{\partial b(u)}{\partial t}-\mbox{div}(a(\nabla u)+K(u))=f,
    & \mbox{in}\, Q,\\
                u(x,t)=0 & \mbox{on}\, \partial \Omega\times (0,T),\\
                b(u)(t=0)=b(u_{0}) & \mbox{in}\,  \Omega.
     \end{array}%
     \right.
\end{equation}
Here, $\Omega$ is a bounded open set of $\RR^d$ $(d\geq 2)$, $T>0$, and $Q=\Omega \times (0,T)$. The stress $a :\Omega\times \RR^d \to \RR^d$ is assumed to have the potential $\varphi:\Omega\times \RR^d \to \RR^+$. Such an equation arises fluid dynamics and rheology (See \citep{Gwiazda2010Math.Methods.Appl}, \citep{M83} ).\\
Throughout this paper, we assume that:\\
 The field $ b: {\RR}\rightarrow {\RR}$ is a strictly increasing $\mathcal{C}^{1}(\RR)$-function, $b(0) = 0$, and there exists $b_{0}\in \RR$ such that
\begin{equation}\label{eqb}
\hspace{-0.5cm}0<b_{0}<b'(s)<b_{1}=2b_{0} \, \ \mbox{for all}\,\ s\in \RR, %\quad \mbox{such that}\quad b_{1}<\frac{1}{\alpha_{0}}.
  \end{equation}
For any Musielak-function $\varphi$ (see definition below \ref{Mf}), the stress $a:\Omega\times\RR^{d} \rightarrow \RR^{d}$ is a  continuous function such that  for all $\xi,\xi^{*}\in \RR^{d}$, for a.e. $x\in \Omega$
\begin{equation}\label{eqa1}
|a(x,\xi)|\leq \overline{ \varphi}^{-1}\varphi(x,|\xi|),
\end{equation}
\begin{equation}\label{eqa2}
(a(x,\xi)-a(x,\xi^{\ast}))(\xi-\xi^{\ast})>0,
\end{equation}
\begin{equation}\label{eqa3}
a(x,\xi)\xi\geq\nu \varphi(x,|\xi|) \quad \mbox{for some} \ \nu>0,
\end{equation}
The diffusion terms $K:\RR\rightarrow\RR^{d}$ is a continuous function  such that
\begin{equation}\label{eqPhi1}
 |K(s)|\leq\nu_{0} \overline{\varphi}^{-1}\varphi(x, \frac{s}{\lambda}) \ \mbox{for all} \,\ s \ \mbox{in} \ \RR,\quad \mbox{for some} \ \nu_{0}>0, 
   \end{equation}
and
   \begin{equation}\label{eqf}
   f\in L^{1}(0,T;L^{2}(\Omega)).
   \end{equation}
%%%%%%%%%%%%%%%%%%%%%%%%%%%%%%%%%%%%%%%%%%%%%%%%%%%%%%%%%%%%%%%%%%%%%

In classical Sobolev spaces, starting with the paper \cite{Vcchio-Posteraro1996}, the authors proved an existence result of a weak solution for the non coercive problem (\ref{eq00000}) in the stationary case $b (u) = 0$ using the
symmetrization method. More later Di Nardo et al. \cite{Di.Nardo2011} have shown the existence
of renormalized solution for the parabolic version, more precisely in the linear case
$b(u) = u$, and the uniqueness for such solutions in the paper \cite{Blanchard-Murat-Redwane2001}, A. Aberqi et al. \cite{AB.BJ.MK.RH2013,AB.BJ.MK.RH2016}
have proved the existence of a renormalized solution for (\ref{eq00000}) with more general parabolic
terms $b(x, s)$.\\
In the Orlicz spaces we refer to \cite{Aharouch-Bennouna2010} where L. Aharouch, J. Bennouna have
proved the existence and uniqueness of entropy solutions in the framework of
Orlicz Sobolev spaces $W^{1}_{0}L_{\varphi}(\Omega)$ assuming the $\triangle_{2}$-condition on the Orlicz-function
$\varphi$. Recently, the uniqueness of renormalized solution of (\ref{eq00000}) in the general case has been proven by A. Aberqi et al. in \cite{Aberqi-Bennouna-Elmassoudi-Hammoumi2019uniq} and by  F. KH. Mukminov in \cite{MUKMINOV2016,MUKMINOV2017} for the Cauchy problem for anisotropic parabolic equation using Kruzhkovis method of doubling the variable.\\
Concerning the Musielak spaces, these are spaces that generalize the Orlicz spaces, the Lebesgue spaces with weight, and the Lebesgue spaces with variable exponent, we refer to \citep{M83}.
To our knowledge, articles dealing with this type of problem numerically, in these spaces, are rare. this prompt us to think about contributing to this study.\\
The difficulty encountered during the proof of the existence and uniqueness of discrete solution, is that the fact the Musielak spaces $L_{\varphi}(Q)$ is not isometrically isomorphic to the Musielak space $L_{\varphi}(0,T;L_{\varphi}(\Omega))$, the term $K$ does not satisfy the coercivity condition, and the nonlinearities are characterized by an Musielak-function $\varphi$, for which the $\Delta_{2}$-condition not imposed, and thus  the spaces $L_{\varphi}(Q)$ and $W^{1,x}_{0}L_{\varphi}(Q)$ are not necessarily reflexive.\\
In this paper, we consider the  weak formulation of (\ref{eq00000}) and propose a convergent
full discretization combining a piecewise constant finite element approximation
with the backward Euler scheme, we construct an approximation solution sequence for problem (\ref{eq00000})
and establish a priori estimation. Our study is done on the isotropic case and  generalizes \cite{Emmrich2013} and \cite{Ruf2017}  where the authors studied only the case $b(u)=u$ and $K=0$.\\

%%%%%%%%%%%%%%%%%%%%%%%%%%%%%%%%%%%%%%%%%%%%%%%%%%%%%%%%%%%%%%%%%%
The outline of this paper is structured as follows: In Sect. 2, we introduce the necessary notation,
give a brief introduction to Musielak-Orlicz spaces.  The description of the numerical method we employ, the construction of the Galerkin scheme,
the proof of existence and uniqueness of the numerical solution, and the derivation of a priori estimates
for the fully discrete solution and the discrete time derivative follow in Sect. 3. Finally, in Sect. 4, we show
convergence towards and, thus, existence of an exact solution (weak solution of \eqref{eq00000}), as well as its uniqueness. An error estimate for the temporal semidiscretization is contained in the “Appendix”.
\section{\textbf{Preliminaries and auxiliary results}}
\subsection{Musielak function}\label{Mf}
Let $\Omega$ be an open subset of $\RR^{d}$ $(d\geq 2)$ and let $\varphi$ be a real-valued function defined in $\Omega\times \RR^{+}$. The function $\varphi$ is called a Musielak function if
\begin{itemize}
  \item $\varphi(x,\, ^{.})$ is an N-function for all $x\in \Omega$ (i.e. convex, non-decreasing, continuous, $\varphi(x,0)=0$, $\varphi(x,0)>0$ for  $t>0$, $\displaystyle\lim_{t\rightarrow 0}\frac{\varphi(x,t)}{t}=0$ and $\displaystyle\lim_{t\rightarrow \infty}\frac{\varphi(x,t)}{t}=\infty$).
  \item $\varphi(^{.},t)$ is a measurable function for all $t\geq 0$.
\end{itemize}
  We put  $\varphi_{x}(t)= \varphi(x,t)$ and we associate its non-negative reciprocal function $\varphi_{x}^{-1}$ with respect to $t$, that is,
$\varphi_{x}^{-1}(\varphi(x,t))=\varphi(x,\varphi_{x}^{-1}(t))=t.$\\
The Musielak function $\varphi$ is said to satisfy the $\Delta_{2}-$ condition if for some $C>0,$ and a non negative function $h,$ integrable in $\Omega,$ we have
\begin{equation}\label{DeltaCND} 
\varphi(x, 2 t) \leq C \varphi(x, t)+h(x)\, \ \mbox{for all}\,\  x \in \Omega \,\  \mbox{and all}\,\  t \geq 0.
\end{equation}
When (\ref{DeltaCND} ) holds only for $t \geq t_{0}>0,$ then $\varphi$ is said to satisfy the $\Delta_{2}-$ condition near infinity.\\
 Let $\varphi$ and $\gamma $ be two Musielak functions, we say that  $\varphi$ dominates $\gamma $ and we write $\gamma \prec \varphi$ near infinity (respectively, globally) if there exist two positive constants $c$ and $t_{0}$ such that for a.e. $x\in \Omega$,
$\gamma(x,t)\leq \varphi(x,ct)$ for all $t\geq t_{0}$ (respectively, for all  $t\geq 0$ ). We say that $\varphi$ and $\gamma $ are equivalents, and we write $\varphi \sim \gamma $ if $\varphi$ dominates $\gamma $ and $\gamma $ dominates $\varphi$. Finally, we say that $\gamma $  grows essentially less rapidly than $\varphi$ at $0$ (respectively, near infinity), and we write $\gamma << \varphi$, if for every positive constant $c$, we have
$\displaystyle\lim_{t\rightarrow 0}\sup_{x\in\Omega}\frac{\gamma(x,ct)}{\varphi(x,t)}=0$ (respectively, $\displaystyle\lim_{t\rightarrow \infty}\sup_{x\in\Omega}\frac{\gamma(x,ct)}{\varphi(x,t)}\big)=0).$
\begin{proposition}(\cite{EAB20})
Let $\gamma<< \varphi$ near infinity and for all $t>0$, $\displaystyle\sup_{x\in\Omega}\gamma(x,t)<\infty$, then
 for all $\epsilon >0$, there exists $C_{\epsilon}>0$ such that
\begin{equation}\label{eqcroit-moins-vite}
\gamma(x,t)\leq  \varphi(x, \epsilon t)+ C_{\epsilon}, \forall t>0.
\end{equation}
\end{proposition}

\subsection{Musielak space}
  Let  $\varphi$ be a Musielak function and a measurable function $u:\Omega \rightarrow \RR$, we define the functional $\displaystyle\varrho_{\varphi,\Omega}(u)=\int_{\Omega} \varphi(x,|u(x)|)dx$. 
 The set $K_{\varphi}(\Omega)=\{u:\Omega \rightarrow \RR \quad\mbox{measurable}:\quad\varrho_{\varphi,\Omega}(u)<\infty\}$ is called the Musielak class. The Musielak space $L_{\varphi}(\Omega)$ is the vector space generated by $K_{\varphi}(\Omega)$, that is, $L_{\varphi}(\Omega)$ is the smallest linear space containing the set $K_{\varphi}(\Omega)$. Equivalently,
$$L_{\varphi}(\Omega)=\{u:\Omega \rightarrow \RR \quad\mbox{measurable}:\quad\varrho_{\varphi,\Omega}(\frac{u}{\lambda})<\infty ,\quad\mbox{for some} \quad \lambda>0\}.$$
 On the other hand, we put $\displaystyle\overline{\varphi}(x,s)=\sup_{t\geq 0}(st-\varphi(x,s))$. $\overline{\varphi}$ is called the Musielak function complementary to $\varphi$ (or conjugate of $\varphi$) in the sense of Young with respect to $s$.
We say that a sequence of function $u_{n}\in L_{\varphi}(\Omega)$ is modular convergent to $u\in L_{\varphi}(\Omega)$ if there exists a constant $\lambda>0$ such that
$\displaystyle\lim_{n\rightarrow\infty}\varrho_{\varphi,\Omega}(\frac{u_{n}-u}{\lambda})=0.$ This implies convergence for $\sigma(\prod L_{\varphi},\prod L_{\overline{\varphi}})$ (see\cite{BV12}).\\
 In the space $L_{\varphi}(\Omega)$, we define the following two norms:
\begin{center}
$\|u\|_{\varphi}=\inf\big\{ \lambda>0: \displaystyle\int_{\Omega} \varphi(x,\frac{|u(x)|}{\lambda})dx\leq 1\big\},$\\
\end{center}
which is called the Luxemburg norm, and the so-called Musielak norm
\begin{center}
$ \displaystyle\||u|\|_{\varphi,\Omega}=\sup_{\|v\|_{\overline{\varphi}}\leq 1}\displaystyle\int_{\Omega}|u(x)v(x)|dx,$\\
\end{center}
where $\overline{\varphi}$ is the Musielak function complementary to $\varphi$. These two norms are equivalent \cite{M83}. $K_{\varphi}(\Omega)$ is a convex subset of $L_{\varphi}(\Omega)$. We define $E_{\varphi}(\Omega)$ as the subset of $L_{\varphi}(\Omega)$ of all measurable functions $u:\Omega \mapsto\RR$ such that $\displaystyle\int_{\Omega} \varphi(x,\frac{|u(x)|}{\lambda})dx<\infty$ for all $\lambda>0.$
It is a separable space and $(E_{\varphi}(\Omega))^{*}=L_{\varphi}(\Omega)$ (\cite{KDB2020}). We have $E_{\varphi}(\Omega)=K_{\varphi}(\Omega)$ if and only if $\varphi$ satisfies the $\Delta_{2}-$condition for the large values of $t$ or for all values of $t$, according to whether $\Omega$ has finite measure or not.\\
 For two complementary Musielak functions $\varphi$ and $\overline{\varphi}$, we have (see\cite{BV12})\\
  the Young inequality,  $\displaystyle \hspace{0.5cm} st\leq \varphi(x,s)+\overline{\varphi}(x,t)$ for all $s,t\geq 0$ , $x\in\Omega,$\\
  the H\"{o}lder inequality, $\displaystyle \big| \int_{\Omega}u(x)v(x) dx \big|\leq \|u\|_{\varphi,\Omega}\||v|\|_{\overline{\varphi},\Omega}$, for all $u\in L_{\varphi}(\Omega)$, $v\in L_{\overline{\varphi}}(\Omega).$\\
\indent We say that a sequence $u_{n}$ converges to $u$ for the modular convergence in  $W^{1}L_{\varphi}(\Omega)$ or in $W_{0}^{1}L_{\varphi}(\Omega)$  if, for some $\lambda>0$,
 $$\lim_{n\rightarrow \infty}\overline{\varrho}_{\varphi,\Omega}\big(\frac{u_{n}-u}{\lambda}\big)=0.$$
\indent Let us define the following spaces of distributions:
$$W^{-1}L_{\overline{\varphi}}(\Omega)=\big\{f\in \mathcal{D}^{'}(\Omega): f=\sum_{\alpha \leq1}(-1)^{\alpha} D^{\alpha}f_{\alpha}, \ \mbox{where} \ f_{\alpha}\in L_{\overline{\varphi}}(\Omega)\big\},$$
$$W^{-1}E_{\overline{\varphi}}(\Omega)=\big\{f\in \mathcal{D}^{'}(\Omega): f=\sum_{\alpha \leq1}(-1)^{\alpha} D^{\alpha}f_{\alpha}, \ \mbox{where} \ f_{\alpha}\in E_{\overline{\varphi}}(\Omega)\big\}.$$
\begin{lemma}\label{lemp1}(\cite{KDB2020}) (Approximation result)
\indent Let $\Omega$ be a bounded Lipschitz domain in $\RR^{d}$ and let $\varphi$ and $\overline{\varphi}$ be two complementary Musielak functions which satisfy the following conditions:
\begin{itemize}
  \item there exists a  constant $ c>0 $ such that $\displaystyle\inf_{x\in \Omega}\varphi(x,1)>c,$
  \item there exists two  constants $A, B>0 $ such that for all $x,y \in\Omega$ with $|x-y|\leq\frac{1}{2}$, we have
  $$\frac{\varphi(x,t)}{\varphi(y,t)}\leq A|t|^{\big(\frac{B}{\log(\frac{1}{|x-y|})} \big)}\quad \mbox{for all } \quad t\geq1,$$
  \item   $\displaystyle \int_{K} \varphi(x,\lambda)dx<\infty,$ for any constant $\lambda > 0$ and for every compact $K \subset\Omega.$
  \item
  $\mbox{there exists a constant}\quad C>0 \quad \mbox{such that} \quad\overline{\varphi}(y,t)\leq C  \quad \mbox{a.e.} \quad in \quad\Omega.$
\end{itemize}
\end{lemma}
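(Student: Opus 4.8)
The approximation result asserts that, under the four stated conditions on the complementary pair $(\varphi,\overline{\varphi})$, the space $\mathcal{D}(\Omega)$ is dense in $W_{0}^{1}L_{\varphi}(\Omega)$ for the modular convergence, so that $W_{0}^{1}L_{\varphi}(\Omega)$ coincides with the modular closure of $\mathcal{D}(\Omega)$. The plan is the classical truncation--localization--regularization scheme, but carried out entirely in the modular sense: since no $\Delta_{2}$ condition is imposed, one cannot expect norm convergence of mollifications and must track the modular parameter $\lambda$ at every stage.

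First I would reduce to bounded functions supported in a fixed compact subset of $\Omega$. Given $u\in W_{0}^{1}L_{\varphi}(\Omega)$, its truncations at level $k$ converge to $u$ modularly in $W_{0}^{1}L_{\varphi}(\Omega)$ by dominated convergence applied to the modular; then, since $\partial\Omega$ is Lipschitz, a cut-off adapted to the boundary together with a slight inward translation of the support brings the support into a compact $K\subset\Omega$ while changing $\varrho_{\varphi,\Omega}\big(\frac{\cdot}{\lambda}\big)$ for $u$ and for $\nabla u$ by an arbitrarily small amount. Using a finite partition of unity subordinate to a covering of $K$ by balls of radius less than $\tfrac12$, I would then split $u$ into finitely many pieces, each bounded and supported in one such ball, so that it suffices to approximate a single such $v$ by its mollification $v_{\varepsilon}=v\ast\rho_{\varepsilon}$.

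The heart of the matter --- and the only place the hypotheses are genuinely used --- is to prove $v_{\varepsilon}\to v$ and $\nabla v_{\varepsilon}\to\nabla v$ modularly in $L_{\varphi}(\Omega)$ as $\varepsilon\to0$. I would estimate $\int_{\Omega}\varphi\bigl(x,\tfrac{|v_{\varepsilon}(x)-v(x)|}{\lambda}\bigr)dx$ by first applying Jensen's inequality to the $\rho_{\varepsilon}$-average against the convex function $t\mapsto\varphi(y,t)$ with $y$ ranging over the support of $\rho_{\varepsilon}(x-\cdot)$, and then transferring from $\varphi(y,\cdot)$ back to $\varphi(x,\cdot)$ via the log-H\"older type bound $\varphi(x,t)\leq A\,|t|^{\,B/\log(1/|x-y|)}\varphi(y,t)$, valid for $t\geq1$ and $|x-y|\leq\tfrac12$. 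Since $|x-y|\leq\varepsilon$ on the relevant set, the exponent $B/\log(1/\varepsilon)$ tends to $0$, so the factor $|t|^{\,B/\log(1/\varepsilon)}$ is bounded uniformly over the bounded range of the integrand; the conditions $\inf_{x}\varphi(x,1)>c$, the local integrability $\int_{K}\varphi(x,\lambda)dx<\infty$, and the uniform bound on $\overline{\varphi}$ then ensure that the leftover contributions (in particular the set where $|v_{\varepsilon}-v|<\lambda$ and the error incurred by replacing $v$ with its average) are integrable and vanish with $\varepsilon$, using continuity of translations in $L^{1}$. The same computation applied to $\partial_{i}v_{\varepsilon}=(\partial_{i}v)\ast\rho_{\varepsilon}$ controls the gradients. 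Reassembling the partition of unity and undoing the cut-off and truncation completes the argument, a standard diagonalization in the modular sense producing the final approximating sequence.

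I expect the main obstacle to be exactly this interaction between mollification and the $x$-dependence of $\varphi$: in a genuine Musielak setting convolution need not map $L_{\varphi}$ continuously into itself, so the log-H\"older continuity of $x\mapsto\varphi(x,\cdot)$ must be exploited quantitatively to absorb the oscillation of $\varphi$ at scale $\varepsilon$, and the whole scheme has to be organized so that a single modular parameter $\lambda$ works for the truncation error, the boundary cut-off error, and the regularization error simultaneously.
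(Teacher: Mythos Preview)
The paper does not prove this lemma at all: it is quoted verbatim from \cite{KDB2020} (Ait Khellou--Benkirane--Douiri), and the text immediately following the statement merely records the conclusion (density of $\mathcal{D}(\Omega)$ in $L_{\varphi}(\Omega)$ and in $W_{0}^{1}L_{\varphi}(\Omega)$ for the modular convergence) and its consequence for the duality pairing. There is therefore no ``paper's own proof'' to compare against.

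That said, your sketch is the right architecture and matches the strategy used in the cited source: truncate, localize to a compact, split by a partition of unity into pieces supported in small balls, mollify, and use the log-H\"older-type bound on $\varphi(x,t)/\varphi(y,t)$ to transfer the Jensen estimate from $\varphi(y,\cdot)$ back to $\varphi(x,\cdot)$ at scale $\varepsilon$. Two points deserve care if you flesh this out. First, the reduction ``truncate then cut off near the boundary'' is more delicate in $W_{0}^{1}L_{\varphi}$ than you suggest: without $\Delta_{2}$ one does not have that $T_{k}u\to u$ modularly in the gradient for free, and the inward translation requires the Lipschitz boundary in a way that should be made explicit (this is where the first and fourth hypotheses, together with the local integrability condition, are actually used to control boundary layers). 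Second, your phrase ``the factor $|t|^{B/\log(1/\varepsilon)}$ is bounded uniformly over the bounded range of the integrand'' is fine for $v$ itself after truncation, but for $\nabla v$ the integrand is not bounded; the correct argument splits $\{|\nabla v|\le M\}$ from its complement and uses the third hypothesis to make the tail small in modular, then applies the log-H\"older transfer only on the bounded part. With these two refinements your outline would reproduce the proof in \cite{KDB2020}.
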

Under these assumptions $\mathcal{D}(\Omega)$ is dense in $L_{\varphi}(\Omega)$  with respect to the modular topology, $\mathcal{D}(\Omega)$  is dense in $W_{0}^{1}L_{\varphi}(\Omega)$  for the modular convergence and $\mathcal{D}(\overline{\Omega})$ is dense in $W_{0}^{1}L_{\varphi}(\Omega)$ for the modular convergence. Consequently, the action of a distribution $S$  in $W^{-1}L_{\overline{\varphi}}(\Omega)$  on an element $u$ of $W_{0}^{1}L_{\varphi}(\Omega)$ is well defined. It will be denoted by $\langle S,u \rangle$.
\begin{remark} The second condition in Lemma \ref{lemp1} coincides with an
	alternative log-H\"{o}lder continuity condition for the variable exponent $p$, namely, there exists $ A>0 $ such that for $ x, y$ close enough and each $t \in \RR^{N} $
	$$|p(x)-p(y)|\leq \frac{A}{\log(\frac{1}{|x-y|})}.$$
	%indeed,
	%$$ \displaystyle \frac{\varphi(x,t)}{\varphi(y,t)}\leq \frac{|t|^{p(x)}}{|t|^{p(y)}} \leq |t|^{p(x)-p(y)} \leq  t^{\big(\frac{A}{\log(\frac{1}{|x-y|})} \big)}.$$
\end{remark}	
%\subsection{Truncation operator}

%%%%%%%%%%%%%%%%%%%%%%%%%%%%%%%%%%
\subsection{Inhomogeneous Musielak-Sobolev spaces :}
%--------------------------------------------------
Let $\Omega$ an bounded open subset $\RR^{d}$ and let $Q=\Omega \times ]0,T[$ with some given $T>0$. Let $\varphi$\,be an Musielak-function, for each $\alpha \in \NN^{d}$,\,denote by $\nabla _{x}^{\alpha}$\, the distributional derivative on $Q$\, of order $\alpha$
with respect to the variable $x \in \NN^{d}$.\,The inhomogeneous Musielak-Sobolev spaces are defined as follows,
\begin{equation}\label{eq21}
 W^{1,x}L_{\varphi}(Q) =\{u \in L_{\varphi}(Q):\nabla _{x}^{\alpha}u \in L_{\varphi}(Q), \forall \alpha \in \NN^{d} , |\alpha| \leq 1 \}
\end{equation}
$$W^{1,x}E_{\varphi}(Q) =\{u \in E_{\varphi}(Q):\nabla _{x}^{\alpha}u \in E_{\varphi}(Q),  \forall \alpha \in \NN^{d} , |\alpha| \leq 1 \}$$
The last space is a subspace of the first one, and both are Banach spaces under the norm
$$\|u\|=\sum _{|\alpha|\leq m}\|\nabla_{x}^{\alpha}u\|_{\varphi,Q}.$$
We can easily show that they form a complementary system when $\Omega$\, satisfies the Lipschitz domain \cite{B.V.1}.
\,These spaces are considered as subspaces of the product space $\Pi L_{\varphi}(Q)$ which have as many copies as there is $\alpha$-order derivatives,$|\alpha| \leq 1$.
We shall also consider the weak topologies $\sigma (\Pi L_{\varphi},\Pi E_{\psi})$\, and $\sigma (\Pi L_{\varphi},\Pi L_{\psi})$.\,
If $u \in W^{1,x}L_{\varphi}(Q)$\, then the function $:t \mapsto u(t)=u(t,.)$\, is defined on $(0,T)$\, with values $W^{1}L_{\varphi}(\Omega)$.\,
.\,
Furthermore the following imbedding holds :$$ W^{1,x}E_{\varphi}(Q) \subset L^{1}(0,T,W^{1,x}E_{\varphi}(\Omega)). $$\,
%%%%%%%%%%%%%%%%%%%%%%%%%%%%%%%%%%%%%%%%%%%%%%%%%%%%%%%%%%
\begin{lemma}\label{lempoincare}\cite{A.B.S.T.}
Under the assumptions of lemma \ref{lemp1}, and by assuming that
$\varphi(x, .)$ decreases with respect to one of coordinate of $x$, there exists a constant
$\lambda > 0$ which depends only on $\Omega$ such that
\begin{equation}\label{eq100}
    \int_{Q}\varphi(x,|u|)dxdt\leq \int_{Q}\varphi(x,\lambda|\nabla u|)dxdt
\end{equation}
\end{lemma}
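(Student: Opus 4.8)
The plan is to establish the Poincaré-type modular inequality (\ref{eq100}) by reducing it to a one-dimensional argument along the coordinate direction in which $\varphi(x,\cdot)$ is monotone, then integrating out the remaining variables and the time variable. First I would fix the coordinate, say $x_d$, along which $\varphi$ is non-increasing, and write $\Omega\subset \{(x',x_d): x'\in\Omega', \ \alpha(x')<x_d<\beta(x')\}$ with $\beta(x')-\alpha(x')\le D:=\mathrm{diam}(\Omega)$. For a function $u\in\mathcal{D}(\Omega)$ (the general case follows by the density from Lemma \ref{lemp1} and Fatou's lemma on the modular) one has the representation $u(x',x_d)=\int_{\alpha(x')}^{x_d}\partial_{x_d}u(x',s)\,ds$, whence $|u(x',x_d)|\le \int_{\alpha(x')}^{\beta(x')}|\nabla u(x',s)|\,ds$.

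The next step is to feed this pointwise bound into $\varphi$. Using that $\varphi(x,\cdot)$ is non-decreasing and convex, together with Jensen's inequality applied to the average over the segment of length $L(x')=\beta(x')-\alpha(x')\le D$,
\begin{equation}
\varphi\bigl(x,|u(x',x_d)|\bigr)\le \varphi\!\left(x,\ L(x')\ \frac{1}{L(x')}\int_{\alpha(x')}^{\beta(x')}|\nabla u(x',s)|\,ds\right)\le \frac{1}{L(x')}\int_{\alpha(x')}^{\beta(x')}\varphi\bigl(x,\,D\,|\nabla u(x',s)|\bigr)\,ds.
\end{equation}
Here the monotonicity of $\varphi$ in $x_d$ is exactly what lets me replace the argument $x=(x',x_d)$ inside $\varphi$ on the right-hand side by $(x',s)$: since $s$ ranges over values that make $\varphi$ larger or equal, one keeps the inequality (one must be slightly careful with the direction of monotonicity, choosing the endpoint of integration accordingly, but this is the mechanism). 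Then integrate in $x_d$ over $(\alpha(x'),\beta(x'))$, which contributes a factor $L(x')$ that cancels the $1/L(x')$, integrate in $x'$ over $\Omega'$, and finally integrate in $t$ over $(0,T)$; a Fubini rearrangement gives $\int_Q \varphi(x,|u|)\,dx\,dt\le \int_Q \varphi(x, D\,|\nabla u|)\,dx\,dt$, so $\lambda=D=\mathrm{diam}(\Omega)$ works, depending only on $\Omega$ as claimed.

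I expect the main obstacle to be handling the $x$-dependence of $\varphi$ cleanly: in the constant-coefficient Orlicz case the estimate is classical, but here one genuinely needs the hypothesis that $\varphi(x,\cdot)$ is monotone in one coordinate so that translating the base point of $\varphi$ along that coordinate does not destroy the inequality. A secondary technical point is the passage from smooth compactly supported $u$ to general $u\in W^{1,x}_0 L_\varphi(Q)$: this uses the modular density of $\mathcal{D}(\Omega)$ (hence of $\mathcal{D}(Q)$-type functions) from Lemma \ref{lemp1}, extraction of an a.e.\ convergent subsequence for both $u_n$ and $\nabla u_n$, and Fatou's lemma on the left together with the modular convergence controlling the right; one should note that the inequality is stated with the \emph{same} $\lambda$ on both sides after absorbing constants, which is legitimate since $\varphi(x,\lambda|\nabla u|)$ is only made larger by enlarging $\lambda$, so there is no loss in taking $\lambda$ uniform. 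Boundedness of $\Omega$ is used to guarantee $D<\infty$, and the Lipschitz assumption (inherited from Lemma \ref{lemp1}) ensures the slicing description of $\Omega$ is valid.
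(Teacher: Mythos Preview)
The paper does not supply its own proof of this lemma: it is stated with the citation \cite{A.B.S.T.} and used as a black box, so there is no ``paper's proof'' to compare against. Your sketch is exactly the standard argument one finds in the cited reference and in related works on modular Poincar\'e inequalities in Musielak--Orlicz spaces: represent $u$ by a one-dimensional integral of $\partial_{x_d}u$ along the distinguished coordinate, apply Jensen to the convex function $\varphi(x,\cdot)$, exploit the monotonicity of $\varphi$ in $x_d$ to pass from the base point $(x',x_d)$ to $(x',s)$, then integrate in the remaining variables and in time, and close by modular density of $\mathcal{D}(\Omega)$ plus Fatou.

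One technical point is worth tightening. After extending the representation to the full segment $[\alpha(x'),\beta(x')]$ and applying Jensen there, the integration variable $s$ runs over the whole interval, so it can lie on either side of $x_d$; the monotonicity hypothesis then no longer gives $\varphi((x',x_d),t)\le \varphi((x',s),t)$ for all such $s$. The usual fix is to keep the short interval (from $\alpha(x')$ to $x_d$ when $\varphi$ is decreasing in $x_d$, or from $x_d$ to $\beta(x')$ when it is increasing), apply Jensen there so that the ordering $s\le x_d$ (respectively $s\ge x_d$) is preserved and the monotonicity can be used legitimately, and only afterwards enlarge the domain of the $s$-integral to $[\alpha,\beta]$ by positivity; the factor coming from Jensen on the short interval is then handled by a Fubini/Hardy-type rearrangement when integrating in $x_d$, rather than by a direct cancellation of $L(x')$. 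Your parenthetical ``choosing the endpoint of integration accordingly'' is pointing at the right idea, but as written the order of operations (extend first, then invoke monotonicity) does not quite go through. Once this is rearranged, the argument is the one in \cite{A.B.S.T.}, and $\lambda$ depending only on $\mathrm{diam}(\Omega)$ is what one obtains.
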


We end this section with some useful lemmas
%%%%%%%%%%%%%%%%%%%%%%%%%%%%%%%%%%%%%%%%%%%%%%%%%%%%%%%%%%%%%%%%%%%%%%%%%%%%%%%%%%%%%%%%%%%%%%%
%%%%%%%%%%%%%%%%%%%%%%%%%%%%%%%%%%%%%%%%%%%%%%%%%%%%%%%%%%%%%%%%%%%%%%%%%%%%%%%%%%%%%%%%%%%%%%%
\begin{lemma}\label{lemfnf}
    If $(u_{n})\subset L^{1}(\Omega)$ with $u_{n}\rightarrow u$ a.e. in $\Omega$, $u_{n}, u\geq0$ a.e. in $\Omega$ and $\displaystyle\int_{\Omega}u_{n}dx\rightarrow \int_{\Omega}u dx$, then $u_{n}\rightarrow u$  in $L^{1}(\Omega)$.
\end{lemma}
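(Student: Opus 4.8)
\p The plan is to reduce the statement to the classical dominated convergence theorem by exploiting the sign conditions. Denote by $w^{-}=\max(-w,0)$ the negative part of a real number $w$, and use the elementary identity $|w| = w + 2w^{-}$. Applying it pointwise to $w = u_{n}-u$ and integrating over $\Omega$ gives
\[
\int_{\Omega}|u_{n}-u|\,dx \;=\; \int_{\Omega}(u_{n}-u)\,dx \;+\; 2\int_{\Omega}(u_{n}-u)^{-}\,dx .
\]
The first term on the right-hand side equals $\int_{\Omega}u_{n}\,dx - \int_{\Omega}u\,dx$, both integrals being finite, and hence tends to $0$ by hypothesis. It therefore suffices to show that the second term tends to $0$.

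\p For the second term I would use the domination $0 \le (u_{n}-u)^{-} \le u$ a.e.\ in $\Omega$: at points where $u_{n}\ge u$ the negative part vanishes, while at points where $u_{n}<u$ one has $(u_{n}-u)^{-} = u-u_{n}\le u$ because $u_{n}\ge 0$. Since $u\in L^{1}(\Omega)$ and, by $u_{n}\to u$ a.e.\ together with the continuity of $t\mapsto t^{-}$, we have $(u_{n}-u)^{-}\to 0$ a.e.\ in $\Omega$, Lebesgue's dominated convergence theorem yields $\int_{\Omega}(u_{n}-u)^{-}\,dx\to 0$. Combining this with the previous paragraph gives $\|u_{n}-u\|_{L^{1}(\Omega)}\to 0$.

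\p There is essentially no obstacle in this argument; the only delicate point is the justification of the domination $(u_{n}-u)^{-}\le u$, which is precisely where the nonnegativity assumptions $u_{n},u\ge 0$ are used, while the convergence of the integrals $\int_{\Omega}u_{n}\to\int_{\Omega}u$ is exactly what is needed to control the signed contribution $\int_{\Omega}(u_{n}-u)$. As an alternative one could appeal directly to the generalized (Pratt) dominated convergence theorem, taking the dominating sequence $g_{n}:=u_{n}+u\ge|u_{n}-u|$ with $g_{n}\to 2u$ a.e.\ and $\int_{\Omega}g_{n}\to\int_{\Omega}2u$; the elementary splitting above, however, keeps the proof self-contained. $\hfill\square$
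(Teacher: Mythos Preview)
Your argument is correct. The paper states this lemma without proof (and without citation), treating it as a known auxiliary fact; it is indeed a standard consequence of dominated convergence, often attributed to Scheff\'e or proved via the generalized dominated convergence (Pratt) theorem that you mention as an alternative. Your elementary splitting $|u_n-u|=(u_n-u)+2(u_n-u)^{-}$ together with the domination $(u_n-u)^{-}\le u$ is precisely the clean way to make the proof self-contained, and nothing further is needed.
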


\begin{lemma}\label{LemmaModulConv}\cite{J.G.2}
Let $u_{n}, u \in L_{\varphi}(\Omega)$. If $u_{n}\rightarrow u$ with respect to the modular convergence, then $u_{n}\rightarrow u$ for $\sigma(\Pi L_{\varphi}, \Pi L_{\overline{\varphi}})$.
\end{lemma}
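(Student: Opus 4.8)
The plan is to unravel both sides of the equivalence. Modular convergence $u_{n}\to u$ means there is a fixed $\lambda>0$ with $\int_{\Omega}\varphi\big(x,\tfrac{|u_{n}-u|}{\lambda}\big)\,dx\to0$, while $u_{n}\to u$ for $\sigma(\Pi L_{\varphi},\Pi L_{\overline{\varphi}})$ amounts to $\int_{\Omega}(u_{n}-u)\,v\,dx\to0$ for every $v\in L_{\overline{\varphi}}(\Omega)$. Write $w_{n}:=u_{n}-u$. The argument rests on two ingredients: modular convergence forces $w_{n}\to0$ in measure, and for each fixed $v$ the sequence $(w_{n}v)$ is uniformly integrable on $\Omega$; Vitali's convergence theorem then concludes.

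First I would prove convergence in measure. Fix $\delta>0$ and set $E_{n}=\{|w_{n}|>\delta\}$; by monotonicity of $\varphi(x,\cdot)$ one has $\varphi\big(x,\tfrac{|w_{n}|}{\lambda}\big)\geq g(x):=\varphi\big(x,\tfrac{\delta}{\lambda}\big)$ on $E_{n}$. Since $\varphi(x,t)>0$ for $t>0$ (N-function property), $g>0$ a.e., and as $|\Omega|<\infty$ the sets $A_{\eta}=\{g\leq\eta\}$ satisfy $|A_{\eta}|\to0$ as $\eta\to0$. Then $\int_{\Omega}\varphi\big(x,\tfrac{|w_{n}|}{\lambda}\big)\,dx\geq\eta\,(|E_{n}|-|A_{\eta}|)$, so $|E_{n}|\leq|A_{\eta}|+\tfrac{1}{\eta}\int_{\Omega}\varphi\big(x,\tfrac{|w_{n}|}{\lambda}\big)\,dx$; letting $n\to\infty$ and then $\eta\to0$ gives $|E_{n}|\to0$, i.e. $w_{n}\to0$ in measure, hence $w_{n}v\to0$ in measure for any measurable $v$ finite a.e. (in particular for $v\in L_{\overline{\varphi}}(\Omega)$, using $|\Omega|<\infty$).

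Next, uniform integrability of $(w_{n}v)$. Pick $\mu>0$ with $\int_{\Omega}\overline{\varphi}\big(x,\tfrac{|v|}{\mu}\big)\,dx<\infty$. Applying the Young inequality $st\leq\varphi(x,s)+\overline{\varphi}(x,t)$ with $s=\tfrac{|w_{n}|}{\lambda}$ and $t=\tfrac{|v|}{\mu}$ and multiplying by $\lambda\mu$ yields, pointwise a.e.,
$$|w_{n}v|\leq\lambda\mu\,\varphi\Big(x,\tfrac{|w_{n}|}{\lambda}\Big)+\lambda\mu\,\overline{\varphi}\Big(x,\tfrac{|v|}{\mu}\Big).$$
The first term on the right tends to $0$ in $L^{1}(\Omega)$, hence is uniformly integrable, and the second is a fixed $L^{1}(\Omega)$ function; being dominated by a uniformly integrable family, $(w_{n}v)$ is itself uniformly integrable on $\Omega$.

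Finally, since $\Omega$ has finite measure, $w_{n}v\to0$ in measure together with the uniform integrability of $(w_{n}v)$ gives, by Vitali's convergence theorem, $\int_{\Omega}|w_{n}v|\,dx\to0$, in particular $\int_{\Omega}(u_{n}-u)\,v\,dx\to0$. As $v\in L_{\overline{\varphi}}(\Omega)$ was arbitrary, $u_{n}\to u$ for $\sigma(\Pi L_{\varphi},\Pi L_{\overline{\varphi}})$. If one prefers to use only the a.e. form of Vitali's theorem, one runs the last two steps along an arbitrary subsequence, extracts an a.e.-convergent sub-subsequence from convergence in measure, and concludes by the usual subsequence principle. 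The only mildly delicate point is the first step, which is exactly where finiteness of $|\Omega|$ and strict positivity of $\varphi(x,\cdot)$ on $(0,\infty)$ are used; the remainder is the standard Young-inequality and uniform-integrability routine.
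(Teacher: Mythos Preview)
Your argument is correct. The paper itself does not supply a proof of this lemma; it simply attributes the result to Gossez \cite{J.G.2} (and mentions the same implication earlier with a reference to \cite{BV12}). Your route---modular convergence $\Rightarrow$ convergence in measure, Young's inequality to dominate $|w_{n}v|$ by an $L^{1}$-convergent term plus a fixed $L^{1}$ function, then Vitali---is the standard self-contained proof and uses exactly the hypotheses available here (in particular $|\Omega|<\infty$ and the strict positivity of $\varphi(x,\cdot)$ on $(0,\infty)$). One cosmetic remark: since the lemma is stated for scalar $u_{n},u\in L_{\varphi}(\Omega)$, the $\sigma(\Pi L_{\varphi},\Pi L_{\overline{\varphi}})$ topology reduces precisely to testing against $v\in L_{\overline{\varphi}}(\Omega)$, as you interpret it; nothing further is needed for the product case, which follows componentwise.
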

\begin{lemma}\label{Lembourness}\cite{Emmrich2013}
Let $\{\xi_{l}\}\in \mathcal{L}_{\varphi}(Q)$ and there exists a positive constatnt $C$ such that\\ $\displaystyle\int_{Q}\varphi(x,|\xi_{l}|)dxdt\leq C$ for all $l\in \NN$. Then there exists $\xi\in \mathcal{L}_{\varphi}(Q)$ and a subsequence, denoted by $l'$, such that  $\xi_{l'}\rightharpoonup \xi$ weakly in $L^{1}(Q)$ and
%$\rho_{M,Q}(\xi)\leq \liminf_{l'\rightarrow\infty}\rho_{M,Q}(\xi_{l'}).$
\[\int_{Q}\varphi(x,|\xi|)dxdt\leq \liminf_{l'\rightarrow\infty}\int_{Q}\varphi(x,|\xi_{l'}|)dxdt.\]
\end{lemma}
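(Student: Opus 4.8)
\p The statement is a standard consequence of the superlinear growth of the N-functions $\varphi(x,\cdot)$ combined with the Dunford--Pettis theorem and the weak lower semicontinuity of convex integral functionals. I would organize the argument in three steps.

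\textbf{Step 1 (equi-integrability).} First I would deduce from the uniform modular bound $\int_{Q}\varphi(x,|\xi_{l}|)\,dx\,dt\le C$ that $\{\xi_{l}\}$ is equi-integrable in $L^{1}(Q)$. Since each $\varphi(x,\cdot)$ is an N-function it is superlinear at infinity, and — using the standing structural hypotheses on $\varphi$ guaranteeing that this growth is controlled uniformly in $x$ — for every $\varepsilon>0$ one can pick $k=k(\varepsilon)$ so that $t\le\frac{\varepsilon}{C}\,\varphi(x,t)$ whenever $t\ge k$, for a.e. $x$. Then for any measurable $E\subset Q$,
\[
\int_{E}|\xi_{l}|\,dx\,dt\le k\,|E|+\frac{\varepsilon}{C}\int_{\{|\xi_{l}|>k\}}\varphi(x,|\xi_{l}|)\,dx\,dt\le k\,|E|+\varepsilon ,
\]
uniformly in $l$; this is exactly the de la Vallée-Poussin criterion, and since $|Q|<\infty$ the sequence is also bounded in $L^{1}(Q)$.

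\textbf{Step 2 (weak compactness).} By the Dunford--Pettis theorem a bounded, equi-integrable subset of $L^{1}(Q)$ is relatively weakly sequentially compact, so there is a subsequence, relabelled $\xi_{l'}$, and a function $\xi\in L^{1}(Q)$ with $\xi_{l'}\rightharpoonup\xi$ weakly in $L^{1}(Q)$.

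\textbf{Step 3 (lower semicontinuity and membership in $\mathcal{L}_{\varphi}(Q)$).} It remains to pass the modular to the limit. The functional $v\mapsto J(v):=\int_{Q}\varphi(x,|v|)\,dx\,dt$ is convex (because $\varphi(x,\cdot)$ is convex and non-decreasing and $v\mapsto|v|$ is convex) and strongly lower semicontinuous on $L^{1}(Q)$ by Fatou's lemma, using the continuity and nonnegativity of $\varphi(x,\cdot)$; a convex, strongly l.s.c. functional is weakly sequentially l.s.c. Concretely, one may invoke Mazur's lemma to get convex combinations $\eta_{m}=\sum_{j}\lambda_{j}^{m}\xi_{l'_{j}}$ with $\eta_{m}\to\xi$ strongly in $L^{1}(Q)$, extract an a.e. convergent subsequence, and combine Fatou with the convexity (Jensen) inequality for $\varphi(x,\cdot)$. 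Either route gives
\[
\int_{Q}\varphi(x,|\xi|)\,dx\,dt\le\liminf_{l'\to\infty}\int_{Q}\varphi(x,|\xi_{l'}|)\,dx\,dt\le C<\infty ,
\]
which simultaneously shows $\xi\in\mathcal{L}_{\varphi}(Q)$ and establishes the claimed estimate.

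\textbf{Main obstacle.} The delicate point is Step 1: because $\varphi$ carries an explicit $x$-dependence, the de la Vallée-Poussin argument needs the superlinear behaviour of $\varphi(x,\cdot)$ to hold uniformly in $x$ (equivalently, the existence of a common superlinear minorant), rather than the mere pointwise superlinearity built into the N-function definition. Once equi-integrability is secured, Steps 2 and 3 are routine applications of the Dunford--Pettis theorem and of the weak lower semicontinuity of convex integrands.
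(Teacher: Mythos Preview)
The paper does not actually prove this lemma; it merely quotes it with a citation to \cite{Emmrich2013}. Your three-step argument (de la Vall\'ee-Poussin/Dunford--Pettis for weak $L^{1}$-compactness, then Mazur plus Fatou for the lower semicontinuity of the modular) is the standard route and is exactly what one finds in that reference in the Orlicz setting. Your identification of the ``main obstacle'' is apt: in \cite{Emmrich2013} the N-function $\varphi$ is independent of $x$, so the superlinearity $\varphi(t)/t\to\infty$ is automatically uniform, whereas in the present Musielak framework one must invoke an additional structural hypothesis (a common superlinear minorant, or equivalently $\inf_{x}\varphi(x,t)/t\to\infty$) to make Step~1 go through; the paper is silent on this point.
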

Now, we denote by $\gamma_{0}w$ the trace of $w:\overline{\Omega}\rightarrow \RR$ such that $\gamma_{0}w=w$ on $\partial\Omega$, for smooth $w$.
\begin{lemma}\label{LemWWWW}\cite{Emmrich2013}
Let $\displaystyle w\in \mathcal{W}$, where
\[\mathcal{W}=\Big\{w\in W^{1,1}(0,T;L^{2}(\Omega)): \nabla w\in(\mathcal{L}_{\varphi}(Q))^{d},\gamma_{0}(w(.,t))=0\ \mbox{for almost}\ t\in(0,T)\Big\}\]
 For any $\epsilon>0$ there is then a smooth function $w_{\epsilon}$ witch vanishes in $\partial\Omega\times(0,T)$ such that
 \[\|w_{\epsilon}-w\|_{W^{1,1}(0,T;L^{2}(\Omega))}<\frac{\epsilon}{2},\]
 and  for all $\eta\in L_{\overline{\varphi}}(Q)$
 \[\displaystyle|\int_{Q}(\nabla (w_{\epsilon}-w)\eta dxdt|<\frac{\epsilon}{2}.\]
\end{lemma}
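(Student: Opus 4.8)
The plan is a two-stage regularization: first mollify in the time variable to gain smoothness in $t$ together with strong convergence in $W^{1,1}(0,T;L^{2}(\Omega))$, and then mollify in space --- combined with a slight inward dilation and a partition of unity adapted to the Lipschitz boundary --- to gain smoothness in $x$ up to $\overline{\Omega}$ while using only the \emph{modular} convergence of the gradients, which is all that is available in $L_{\varphi}(Q)$ in the absence of the $\Delta_{2}$-condition. Throughout, $\eta\in L_{\overline{\varphi}}(Q)$ is regarded as fixed.

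\textbf{Step 1 (regularization in time).} I would extend $w$ to $\widetilde{w}\in W^{1,1}(\RR;L^{2}(\Omega))$ with $\nabla\widetilde{w}\in(\mathcal{L}_{\varphi}(\RR\times\Omega))^{d}$, for instance by even reflection at $t=0$ and $t=T$ followed by a time cut-off, and set $w_{\delta}(t,\cdot)=(\rho_{\delta}\ast_{t}\widetilde{w})(t,\cdot)$ for a standard mollifier $\rho_{\delta}$. Then $w_{\delta}\to w$ in $W^{1,1}(0,T;L^{2}(\Omega))$ as $\delta\to 0$; each $w_{\delta}(\cdot,t)$ lies in $W^{1}L^{2}(\Omega)$ and has $\gamma_{0}(w_{\delta}(\cdot,t))=0$ for a.e. $t$, since $\gamma_{0}$ is linear and commutes with convolution in $t$. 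Moreover $\nabla_{x}w_{\delta}=\rho_{\delta}\ast_{t}(\nabla_{x}\widetilde{w})$, and by Jensen's inequality applied to the modular $\varrho_{\varphi}$ this converges to $\nabla w$ for the modular convergence in $L_{\varphi}(Q)$; hence, by Lemma \ref{LemmaModulConv}, $\int_{Q}(\nabla w_{\delta}-\nabla w)\eta\,dx\,dt\to 0$.

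\textbf{Step 2 (regularization in space, up to the boundary).} Fix $\delta$. For a.e. $t$ one has $w_{\delta}(\cdot,t)\in W_{0}^{1}L_{\varphi}(\Omega)$, so I would apply to it the construction underlying Lemma \ref{lemp1}: cover $\overline{\Omega}$ by finitely many open sets on which $\partial\Omega$ is a Lipschitz graph, take a subordinate partition of unity, on each piece translate the corresponding localized part of $w_{\delta}$ slightly into $\Omega$ so that its support lies strictly inside $\Omega$, convolve with a spatial mollifier of radius $\sigma$, and reassemble. This yields $w_{\delta,\sigma}$, smooth in $x$ up to $\overline{\Omega}$, still smooth in $t$, and vanishing on $\partial\Omega\times(0,T)$. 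As $\sigma\to 0$ one obtains $w_{\delta,\sigma}\to w_{\delta}$ in $W^{1,1}(0,T;L^{2}(\Omega))$ (a purely Lebesgue-space statement) and $\nabla w_{\delta,\sigma}\to\nabla w_{\delta}$ for the modular convergence in $L_{\varphi}(Q)$ --- the log-H\"older-type hypothesis on $\varphi$ in Lemma \ref{lemp1} is precisely what keeps the dilation step from destroying this modular convergence. By Lemma \ref{LemmaModulConv} again, $\int_{Q}(\nabla w_{\delta,\sigma}-\nabla w_{\delta})\eta\,dx\,dt\to 0$ as $\sigma\to 0$.

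\textbf{Step 3 (diagonal choice) and main difficulty.} Given $\epsilon>0$, choose $\delta$ so small that $\|w_{\delta}-w\|_{W^{1,1}(0,T;L^{2}(\Omega))}<\epsilon/4$ and $\abs{\int_{Q}(\nabla w_{\delta}-\nabla w)\eta\,dx\,dt}<\epsilon/4$, then $\sigma$ so small that $\|w_{\delta,\sigma}-w_{\delta}\|_{W^{1,1}(0,T;L^{2}(\Omega))}<\epsilon/4$ and $\abs{\int_{Q}(\nabla w_{\delta,\sigma}-\nabla w_{\delta})\eta\,dx\,dt}<\epsilon/4$; setting $w_{\epsilon}:=w_{\delta,\sigma}$ and using the triangle inequality gives the two claimed estimates. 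The crux is that, since $\varphi$ need not satisfy $\Delta_{2}$, none of the regularizations converges to $w$ \emph{in the norm} of $W^{1,x}L_{\varphi}(Q)$: only modular convergence of the gradients is available, and it must be converted into the weak pairing estimate through Lemma \ref{LemmaModulConv}. Coordinating this weak/modular control of $\nabla w_{\epsilon}-\nabla w$ with the genuinely \emph{strong} $W^{1,1}(0,T;L^{2})$ control, while simultaneously dilating and cutting off near a merely Lipschitz boundary so that $w_{\epsilon}$ becomes smooth on $\overline{Q}$ and still has vanishing trace, is the delicate part, and it is handled by invoking the Musielak-space density machinery of Lemma \ref{lemp1}.
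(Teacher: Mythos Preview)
The paper does not supply its own proof of this lemma; it is simply quoted from \cite{Emmrich2013}, so there is no argument in the present paper to compare your proposal against. Your two-stage mollification---first in $t$, then in $x$ via the inward dilation and partition-of-unity construction that underlies Lemma~\ref{lemp1}---together with the passage from modular convergence of the gradients to convergence of the pairing via Lemma~\ref{LemmaModulConv}, is precisely the standard route for such approximation statements in Musielak--Orlicz spaces without $\Delta_{2}$, and it is in essence how the corresponding result is obtained in \cite{Emmrich2013} (there in the Orlicz setting). Your sketch is sound at the level of detail given.

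One comment on the quantifiers, which you have in fact handled correctly: as printed, the lemma places ``for all $\eta\in L_{\overline{\varphi}}(Q)$'' \emph{after} $w_{\epsilon}$ has been chosen, which taken literally is impossible without a norm bound on $\eta$. Your decision to regard $\eta$ as fixed from the outset is the only coherent reading, and it matches exactly how the lemma is used later in the proof of Theorem~\ref{Cvrgn-Approxim-Sol}, where one takes $\eta=\alpha+K(u)$, a specific element of $(\mathcal{L}_{\overline{\varphi}}(Q))^{d}$. So this is an imprecision in the statement, not a gap in your argument.
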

\textbf{Notation}:
Let $X$ be a Banach space, by $\mathcal{C} ([0, T ];X)$, we denote the
usual space of continuous functions $u : [0, T ] \rightarrow X$, and $\mathcal{C}_{w} ([0, T ];X)$ denotes the
space of demicontinuous functions (i.e., continuous with respect to the weak topology in X).\\
And let $T_k,$ denotes the truncation function at level $k>0,$ defined on $\RR$ by
 $\displaystyle T_k(r)=\max(-k,\min(k,r)).$
%%%%%%%%%%%%%%%%%%%%%%%%%%%%%%%%%%%%%%%%%%%%%%%%%%%%%%%%%%%%%%%%%%
\section{Description of the numerical method}
In this section, we will describe the numerical method we employ, the construction of the Galerkin scheme,
the proof of existence and uniqueness of the numerical solution, and the derivation of a priori estimates
for the fully discrete solution and the discrete time derivative.
\paragraph{\textbf{A full discretization:}}
\begin{enumerate}
\item \textbf{For the spatial descritization:}
   We consider a generalized internal approximation $(V_{m})_{m\in \NN}$ of the space
\[V=\{v\in L^{2}(\Omega): \nabla v\in (E_{\varphi}(\Omega))^{d},\gamma_{0}v=0\}, \quad \|v\|_{V}=\|v\|_{2,\Omega}+\|\nabla v\|_{\varphi,\Omega},\] and the restriction operators $R_{m}:V\rightarrow V_{m}$ such that for any sequence $\{m_{l}\}_{l\in \NN}$ with $m_{l}\rightarrow \infty$ as $l\rightarrow \infty$ there holds
\begin{equation}\label{restriOperat-Rm}
    R_{m_{l}}v \rightarrow v \quad \mbox{in}\quad V \quad \mbox{as}\quad l\rightarrow +\infty\quad v\in V.
\end{equation}
Since $V$ is separable Banach space, there exists a Galerkin basis and an internal approximation scheme for $V$ (for more details, see \citep{Ciarlet},\cite{Emmrich2013}) and \citep{Lars} for such construction of restriction operator.\\

\item \textbf{For the temporal descritization:} For $N\in \NN$ ($N\geq 1)$, let $\tau=\frac{T}{N}$ and $t_{n}=n\tau$ ($n=0,1,..., N)$, according to Taylor-Young's formula
 \[b(u(x,t_{n}))-b(u(x,t_{n-1}))=\partial_{t}b(u(x,t_{n-1}))\tau+\tau\epsilon(\tau)\quad\mbox{such that}\quad\lim_{\tau\rightarrow0}\epsilon(\tau)=0,\]
  then we seek to find a $\{u^{n}\}_{n=1}^{N}\subset V_{m}$ such that for $n=1,2,...,N$
\begin{equation}\label{eqNumMethod}
    \int_{\Omega}\Big[(\frac{b(u^{n})-b(u^{n-1})}{\tau})v+a(\nabla u^{n})\nabla v+K(u^{n})\nabla v\Big]dx=\int_{\Omega}f(.,t_{n})vdx,\, \forall v\in V_{m}.
\end{equation}
Here, $u^{0}\in V_{m}$ denotes a suitable approximation of the initial datum $u_{0}\in L^{2}(\Omega)$.\\
\begin{lemma}
Fix $u^{0}\in V_{m}$ and assume that \eqref{eqa1}- \eqref{eqPhi1} hold true, then there exists a weak solution of
\[\int_{\Omega}\Big[(\frac{b(u)-b(u^{n-1})}{\tau})v+a(\nabla u)\nabla v+K(u)\nabla v\Big]dx=\int_{\Omega}f(.,t_{n})vdx\quad \forall v\in V_{m}.
\] 
and if $K$ satisfies the condition
\begin{equation}\label{eqPhi2}
        |K(s)-K(s')|\leq \nu_{1} |s-s'| \quad \mbox{for all} \quad   s, s' \in \RR, \, \mbox{and some} \, \nu_{1}>0,
   \end{equation}
   then the solution is unique.
 \end{lemma}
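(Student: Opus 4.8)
The problem is finite-dimensional: we look for $u$ in the fixed finite-dimensional space $V_m$, so the plan is to recast the discrete identity as a zero-finding problem for a continuous map on $V_m$ and invoke the corollary of Brouwer's fixed-point theorem (if $P:V_m\to V_m$ is continuous and $(Pu,u)\ge 0$ on some sphere $\{\|u\|_V=R\}$, then $P$ has a zero in the corresponding ball). Define $P:V_m\to V_m$ by
\[
(Pu,v)=\int_{\Omega}\Big[\tfrac{b(u)-b(u^{n-1})}{\tau}v+a(\nabla u)\cdot\nabla v+K(u)\cdot\nabla v-f(\cdot,t_n)v\Big]dx\qquad\forall v\in V_m,
\]
where $(\cdot,\cdot)$ is the $L^2$-inner product restricted to $V_m$; a zero of $P$ is exactly the desired solution. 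Continuity of $P$ follows from the continuity of $b$, $a$, $K$ together with the growth bounds \eqref{eqb}, \eqref{eqa1}, \eqref{eqPhi1}: on $V_m$ all norms are equivalent, so $u_k\to u$ in $V_m$ yields $u_k\to u$ in $L^2(\Omega)$ and $\nabla u_k\to\nabla u$ in $L_\varphi(\Omega)$, a subsequence converges a.e., the integrands are dominated, and Vitali's theorem passes to the limit (the full sequence converging by uniqueness of the limit).

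The key point is the a priori bound $(Pu,u)\ge 0$ on a sufficiently large sphere. Testing with $v=u$: the $b$-term is bounded below by $\tfrac{b_0}{\tau}\|u\|_2^2$ using $b(0)=0$ and $b'\ge b_0$ from \eqref{eqb}, minus the term $\tfrac{b_1}{\tau}\|u^{n-1}\|_2\|u\|_2$ (where $|b(s)|\le b_1|s|$) absorbed by Young's inequality; the source contributes $-\|f(\cdot,t_n)\|_2\|u\|_2$, absorbed likewise. The $a$-term is bounded below by $\nu\int_{\Omega}\varphi(x,|\nabla u|)\,dx\ge 0$ thanks to \eqref{eqa3}. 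For the non-coercive $K$-term, \eqref{eqPhi1} and the Young inequality $\overline{\varphi}^{-1}\varphi(x,|u|/\lambda)\,|\nabla u|\le\varphi(x,|u|/\lambda)+\varphi(x,|\nabla u|)$ give $\big|\int_{\Omega}K(u)\cdot\nabla u\,dx\big|\le\nu_0\int_{\Omega}\varphi(x,|u|/\lambda)\,dx+\nu_0\int_{\Omega}\varphi(x,|\nabla u|)\,dx$; the gradient contribution is absorbed into the $a$-term, while the first integral is controlled by $\int_{\Omega}\varphi(x,|\nabla u|)\,dx$ through the Poincaré-type inequality of Lemma \ref{lempoincare} (choosing $\lambda$ large, resp.\ $\nu_0$ small, relative to $\nu$). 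Hence $(Pu,u)\ge\tfrac{b_0}{2\tau}\|u\|_2^2-C\ge 0$ once $\|u\|_V=R$ is large, and since on $V_m$ the modular $\int_{\Omega}\varphi(x,|\nabla u|)\,dx$ stays finite, the Brouwer corollary yields $u\in V_m$ with $Pu=0$.

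For uniqueness under the Lipschitz bound \eqref{eqPhi2}: if $u,\tilde u\in V_m$ both solve, subtract the two identities and test with $w:=u-\tilde u\in V_m$. The $b$-term gives $\ge\tfrac{b_0}{\tau}\|w\|_2^2$ (mean value theorem and \eqref{eqb}), the $a$-term is $\ge 0$ by the strict monotonicity \eqref{eqa2}, and \eqref{eqPhi2} bounds the $K$-contribution by $\nu_1\|w\|_2\,\|\nabla w\|_2$. Since $V_m$ is finite-dimensional there is $\Lambda_m>0$ with $\|\nabla w\|_2\le\Lambda_m\|w\|_2$ for $w\in V_m$, whence $0\ge\big(\tfrac{b_0}{\tau}-\nu_1\Lambda_m\big)\|w\|_2^2$, forcing $w=0$ as soon as $\tau<b_0/(\nu_1\Lambda_m)$, which is harmless in the discretization since $\tau\to 0$; alternatively, if $\nabla w\equiv 0$ the vanishing trace already gives $w=0$. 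The delicate step is the existence part—absorbing the non-coercive diffusion term $K$ without a $\Delta_2$-condition on $\overline{\varphi}$, where the interplay of \eqref{eqPhi1}, Young's inequality and Lemma \ref{lempoincare} is essential; the uniqueness argument is then a routine monotonicity/contraction estimate.
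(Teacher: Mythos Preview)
Your existence argument via the Brouwer corollary is a reasonable and self-contained route for the finite-dimensional problem; the paper instead simply delegates existence to external references on elliptic equations in Musielak--Orlicz spaces. The absorption of the $K$-term through \eqref{eqPhi1}, Young's inequality and the Poincar\'e estimate of Lemma~\ref{lempoincare} (under the same implicit smallness of $\nu_0$ relative to $\nu$ that the paper uses in Theorem~\ref{Uniformbounded-discret-solution}) is consistent with the paper's approach.

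The uniqueness argument, however, has a genuine gap. Testing with $w=u-\tilde u$ and then appealing to the inverse inequality $\|\nabla w\|_{2}\le\Lambda_m\|w\|_{2}$ only yields uniqueness under the CFL-type restriction $\tau<b_0/(\nu_1\Lambda_m)$. Calling this ``harmless since $\tau\to 0$'' is not justified: $\Lambda_m\to\infty$ as $m\to\infty$ for any standard Galerkin basis, so you would be forcing a coupling between $\tau_\ell$ and $m_\ell$, whereas the paper treats $\tau_\ell\le\tau_0<1$ and $m_\ell\to\infty$ as independent sequences. The lemma asserts uniqueness for \emph{every} fixed $\tau$ and $m$, and your argument does not deliver that; the ``alternatively, if $\nabla w\equiv 0$'' clause does not cover the complementary case either.

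The paper avoids the restriction by testing not with $v-w$ but with $\tfrac{1}{k}T_k(v-w)$ and sending $k\to 0$. On $\{|v-w|\le k\}$ the Lipschitz bound \eqref{eqPhi2} gives $|K(v)-K(w)|\le\nu_1 k$, so after dividing by $k$ the diffusion contribution is bounded by $\nu_1\int_{\{|v-w|\le k\}}|\nabla(v-w)|\,dx$, which tends to $0$ by dominated convergence since $|\nabla(v-w)|\in L^1(\Omega)$. Meanwhile the $b$-term converges to $\tfrac{1}{\tau}\int_\Omega (b(v)-b(w))\,\mathrm{sgn}(v-w)\,dx\ge\tfrac{b_0}{\tau}\|v-w\|_{L^1(\Omega)}$ and the $a$-term stays nonpositive by \eqref{eqa2}. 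This gives $\|v-w\|_{L^1(\Omega)}=0$ with no constraint on $\tau$. (One should note that $T_k(v-w)$ need not lie in $V_m$, a point the paper glosses over; but the truncation device is in any case the correct substitute for your inverse-inequality step, and is exactly what is reused in Step~4 of Theorem~\ref{Cvrgn-Approxim-Sol} and in Theorem~\ref{TheoError}.)
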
  
Indeed, for the existence of the weak solution to \eqref{eqNumMethod}, we refer to (\cite{Aberqi-Bennouna-Elmassoudi-Hammoumi2019uniq}, \citep{A19}).\\
For the uniqueness, let $v$ and $w$ are two solutions of \eqref{eqNumMethod} and taking $\frac{1}{k}T_{k}(v-w)$ as a test function,
%$$\frac{1}{ k}\int_{\Omega} \frac{(v-w)}{\tau}T_{k}(v-w)dx +\frac{1}{k}\int_{\Omega}(a(v)-a(w))\nabla T_{k}(v-w)dx +\frac{1}{k}\int_{\Omega}(K(v)-K(w))\nabla T_{k}(v-w)dx=0$$
thus
\[\frac{1}{ k}\int_{\Omega}  (\frac{(b(v)-b(w)}{\tau})T_{k}(v-w)dx =-\frac{1}{k}\int_{\Omega}(a(\nabla v)-a(\nabla w))\nabla T_{k}(v-w)dx\] \[\hspace{5cm}-\frac{1}{k}\int_{\Omega}(K(v)-K(w))\nabla T_{k}(v-w)dx.\]
We have
 \[\displaystyle\lim_{k\rightarrow 0}\frac{1}{ k}\int_{\Omega}   (\frac{b(v)-b(w)}{\tau})T_{k}(v-w)dx=\int_{\Omega}(\frac{b(v)-b(w)}{\tau}) sgn(v-w)dx\]
 \[\hspace{3cm}\geq \frac{b_{0}}{\tau}\int_{\Omega} |v-w|dx.\]
% and $$\int_{\Omega}\frac{1}{\tau }b'(u^{n-1}) |v-w|dx\geq \frac{p_{0}}{\tau}\int_{\Omega} |v-w|dx$$
By \eqref{eqa2} we have $\displaystyle-\frac{1}{k}\int_{\Omega}(a(\nabla v)-a(\nabla w))\nabla T_{k}(v-w)dx\leq0$\\
Using \eqref{eqPhi2} we get
\[\frac{1}{k}\int_{\Omega}(K(v)-K(w))\nabla T_{k}(v-w)dx\leq \frac{1}{k}\int_{|v-w|\leq k}\nu_{1}|v-w||\nabla T_{k}(v-w)|dx\]
\[\hspace{3.7cm}\leq \int_{|v-w|\leq k}\nu_{1}|\nabla(v-w)|dx.\]
Since $|\nabla(v-w)|\in L^{1}(\Omega)$
then $\displaystyle\lim_{k\rightarrow 0}\frac{1}{k}\int_{\Omega}(K(v)-K(w))\nabla T_{k}(v-w)dx=0$.\\
Thus
\[\frac{b_{0}}{\tau}\int_{\Omega} |v-w|dx\leq0.\]
This implies that $v=w$.\\
We conclude that for any $u^{0}\in V_{m}$ and $f\in L^{1}(0,T;L^{2}(\Omega))$, there exists a unique solution $\displaystyle\{u^{n}\}_{n=1}^{N}\subset V_{m}$ to \eqref{eqNumMethod}.
\item \textbf{A priori estimates for the descret solution:}\\
In this section, we will derive some a priori estimates, which will play a crucial role in achieving the convergence of the numerical solution.
\begin{theorem}\label{Uniformbounded-discret-solution}
Let $u^{0}\in V_{m}$ and $f\in L^{1}([0,T];L^{2}(\Omega))$. Let $\displaystyle\{u^{n}\}_{n=1}^{N}\subset V_{m}$ be the solution to \eqref{eqNumMethod} and let $\tau\leq\tau_{0}<1$. Then there exist  a positive constants $c_{1}$, $c_{2}$, $c_{3}$ and $c_{4}$ depending on $p_{0}$, $p_{1}$, $\nu$, $T$ and $\tau_{0}$ such that for all $n=1,2,...,N$,
$$\|b(u^{n})\|_{2,\Omega}^{2}+c_{1}\sum_{j=1}^{n}\|b(u^{j})-b(u^{j-1})\|_{2,\Omega}^{2}+c_{2}\sum_{j=1}^{n}\int_{\Omega}\varphi(x,|\nabla u^{j}|)dx$$
\begin{equation}\label{eqUnf-bund-Sol}
\leq c_{3}\|f\|^{2}_{L^{1}([0,T];L^{2}(\Omega))}+c_{1}\|b(u^{0})\|_{2,\Omega}^{2},
\end{equation}
and
\begin{equation}\label{eqnormUn}
\sum_{j=1}^{n}\|u^{n}-u^{n-1}\|_{2,\Omega}\leq c_{4}.
\end{equation}
\end{theorem}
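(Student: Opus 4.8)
\emph{Strategy and the accumulation term.} For each $n\in\{1,\dots,N\}$ I would test the Galerkin identity \eqref{eqNumMethod} with $v=u^{n}\in V_{m}$, multiply by $\tau$ and sum over $j=1,\dots,n$, obtaining
\[
\sum_{j=1}^{n}\int_{\Omega}\big(b(u^{j})-b(u^{j-1})\big)u^{j}\,dx+\tau\sum_{j=1}^{n}\int_{\Omega}a(\nabla u^{j})\nabla u^{j}\,dx+\tau\sum_{j=1}^{n}\int_{\Omega}K(u^{j})\nabla u^{j}\,dx=\tau\sum_{j=1}^{n}\int_{\Omega}f(\cdot,t_{j})u^{j}\,dx,
\]
and then bound the four sums separately, closing the estimate with a quadratic Gronwall-type argument suited to $L^{1}$-in-time data. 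Since $b'\in(b_{0},b_{1})$, the inverse $b^{-1}$ satisfies $(b^{-1})'\in(1/b_{1},1/b_{0})$; setting $\Psi(s)=\int_{0}^{s}b^{-1}(\sigma)\,d\sigma$ gives a $\tfrac1{b_{1}}$-strongly convex, $\tfrac1{b_{0}}$-smooth function with $\Psi(0)=\Psi'(0)=0$, hence $\tfrac1{2b_{1}}s^{2}\le\Psi(s)\le\tfrac1{2b_{0}}s^{2}$. Using $u^{n}=b^{-1}(b(u^{n}))=\Psi'(b(u^{n}))$ and strong convexity of $\Psi$ pointwise,
\[
\big(b(u^{n})-b(u^{n-1})\big)u^{n}\ \ge\ \Psi(b(u^{n}))-\Psi(b(u^{n-1}))+\frac1{2b_{1}}\big|b(u^{n})-b(u^{n-1})\big|^{2},
\]
so that after integration over $\Omega$ and summation the first sum telescopes and is bounded below by $\int_{\Omega}\Psi(b(u^{n}))\,dx-\int_{\Omega}\Psi(b(u^{0}))\,dx+\tfrac1{2b_{1}}\sum_{j=1}^{n}\|b(u^{j})-b(u^{j-1})\|_{2,\Omega}^{2}$. (Note $b(u^{n})\notin V_{m}$ in general, so testing with $b(u^{n})$ is not available and control of $\|b(u^{n})\|_{2,\Omega}$ must be recovered through $\Psi$ in this way.)

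\emph{Diffusion and transport terms.} The coercivity assumption \eqref{eqa3} gives immediately $\tau\sum_{j=1}^{n}\int_{\Omega}a(\nabla u^{j})\nabla u^{j}\,dx\ge\nu\tau\sum_{j=1}^{n}\int_{\Omega}\varphi(x,|\nabla u^{j}|)\,dx$. For the non-coercive transport term I note that $u^{j}\in V_{m}\subset V$ has vanishing trace, while \eqref{eqPhi1}, the Poincaré-type inequality of Lemma \ref{lempoincare} and the Hölder inequality in Musielak spaces ensure $K(u^{j})\in L_{\overline{\varphi}}(\Omega)$ and $K(u^{j})\cdot\nabla u^{j}\in L^{1}(\Omega)$; writing $\widetilde{\Phi}(s)=\int_{0}^{s}K(\sigma)\,d\sigma$ one has $K(u^{j})\cdot\nabla u^{j}=\operatorname{div}\widetilde{\Phi}(u^{j})$ with $\widetilde{\Phi}(u^{j})\in(W_{0}^{1,1}(\Omega))^{d}$, hence $\int_{\Omega}K(u^{j})\nabla u^{j}\,dx=0$. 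Alternatively, this term is absorbed into the $\nu$-coercive one by applying Young's inequality in Musielak spaces to $|K(u^{j})|\,|\nabla u^{j}|$ together with \eqref{eqPhi1}, the convexity of $\varphi$ and Lemma \ref{lempoincare}, at the price of a smaller constant $c_{2}$.

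\emph{Closing the estimate and the bound \eqref{eqnormUn}.} On the right-hand side, $|b(s)|\ge b_{0}|s|$ yields $\|u^{j}\|_{2,\Omega}\le b_{0}^{-1}\|b(u^{j})\|_{2,\Omega}$, so with $M_{n}:=\max_{1\le j\le n}\|b(u^{j})\|_{2,\Omega}$ and $\tau\sum_{j=1}^{n}\|f(\cdot,t_{j})\|_{2,\Omega}\le\|f\|_{L^{1}(0,T;L^{2}(\Omega))}$ one gets $\tau\sum_{j=1}^{n}\int_{\Omega}f(\cdot,t_{j})u^{j}\,dx\le b_{0}^{-1}M_{n}\|f\|_{L^{1}(0,T;L^{2}(\Omega))}$. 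Collecting the estimates and using $\int_{\Omega}\Psi(b(u^{n}))\,dx\ge\tfrac1{2b_{1}}\|b(u^{n})\|_{2,\Omega}^{2}$ and $\int_{\Omega}\Psi(b(u^{0}))\,dx\le\tfrac1{2b_{0}}\|b(u^{0})\|_{2,\Omega}^{2}$, we obtain for every $n$
\[
\frac1{2b_{1}}\|b(u^{n})\|_{2,\Omega}^{2}+\frac1{2b_{1}}\sum_{j=1}^{n}\|b(u^{j})-b(u^{j-1})\|_{2,\Omega}^{2}+\nu\tau\sum_{j=1}^{n}\int_{\Omega}\varphi(x,|\nabla u^{j}|)\,dx\ \le\ \frac1{2b_{0}}\|b(u^{0})\|_{2,\Omega}^{2}+\frac1{b_{0}}M_{n}\|f\|_{L^{1}(0,T;L^{2}(\Omega))}.
\]
Dropping the two nonnegative sums on the left and taking the maximum over $n$ gives a quadratic inequality for $M_{N}$, hence $M_{N}\le C\big(\|f\|_{L^{1}(0,T;L^{2}(\Omega))}+\|b(u^{0})\|_{2,\Omega}\big)$; reinserting this and using $2ab\le a^{2}+b^{2}$ yields \eqref{eqUnf-bund-Sol} with constants depending only on $b_{0},b_{1},\nu,T$ and $\tau_{0}$. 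Finally, by the mean value theorem $|u^{j}-u^{j-1}|\le b_{0}^{-1}|b(u^{j})-b(u^{j-1})|$ pointwise, so $\sum_{j=1}^{n}\|u^{j}-u^{j-1}\|_{2,\Omega}\le b_{0}^{-1}\sum_{j=1}^{n}\|b(u^{j})-b(u^{j-1})\|_{2,\Omega}$, which is bounded by $c_{4}$ by means of \eqref{eqUnf-bund-Sol} and the Cauchy--Schwarz inequality, proving \eqref{eqnormUn}.

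\emph{Main obstacle.} I expect the two delicate points to be: first, handling the non-coercive term $\int_{\Omega}K(u^{j})\nabla u^{j}\,dx$ without a $\Delta_{2}$-condition on $\varphi$, which forces one to argue entirely at the level of the modular $\int_{\Omega}\varphi(x,|\nabla u^{j}|)\,dx$ (never the Luxemburg norm) and to verify the integrability needed either to discard it via the divergence structure or to absorb it into the coercive part; and second, the fact that the data is only $L^{1}$ in time, so the estimate cannot be closed by a standard discrete Gronwall lemma but must pass through the quadratic inequality for $M_{N}$ above, with all constants kept independent of $\tau\le\tau_{0}$ and of $n\le N$.
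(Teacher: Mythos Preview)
Your argument is correct and in fact more careful than the paper's in one respect, while following a genuinely different (though parallel) route in another.

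\emph{Test function and accumulation term.} The paper tests \eqref{eqNumMethod} with $v=b(u^{n})$ and uses the algebraic identity $2(a-b)a=a^{2}-b^{2}+(a-b)^{2}$ to obtain directly
\[
\|b(u^{n})\|_{2,\Omega}^{2}-\|b(u^{n-1})\|_{2,\Omega}^{2}+\|b(u^{n})-b(u^{n-1})\|_{2,\Omega}^{2}
\]
for the discrete time-derivative part; the elliptic and transport terms then carry an extra factor $b'(u^{n})\in(b_{0},b_{1})$. You rightly observe that $b(u^{n})$ need not lie in $V_{m}$, and instead test with $v=u^{n}\in V_{m}$, recovering the same structure via the strongly convex potential $\Psi(s)=\int_{0}^{s}b^{-1}(\sigma)\,d\sigma$. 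Your route is more rigorous in the Galerkin setting; the paper's is shorter but tacitly assumes $b(u^{n})$ is an admissible test function.

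\emph{Transport term.} Your first option, rewriting $K(u^{j})\cdot\nabla u^{j}=\operatorname{div}\widetilde{\Phi}(u^{j})$ and integrating to zero, is cleaner and removes any smallness requirement on $\nu_{0}$. The paper instead takes (essentially) your second option: it applies Young's inequality and Lemma~\ref{lempoincare} to bound $\int_{\Omega}b'(u^{n})K(u^{n})\nabla u^{n}\,dx$ by $4\tau b_{1}\nu_{0}\int_{\Omega}\varphi(x,|\nabla u^{n}|)\,dx$ and absorbs it into the coercive part, which forces the structural condition $\nu>4\nu_{0}$.

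\emph{Closing and \eqref{eqnormUn}.} From the summed inequality onward the two arguments coincide: both pass through $\max_{j}\|b(u^{j})\|_{2,\Omega}$ and a quadratic (Young) inequality rather than a Gronwall step, and both derive \eqref{eqnormUn} from $b_{0}|u^{j}-u^{j-1}|\le|b(u^{j})-b(u^{j-1})|$ together with \eqref{eqUnf-bund-Sol}.
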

%\textbf{Proof}\\
\begin{proof}
Taking $v=b(u^{n})$ in \eqref{eqNumMethod} for the discrete time derivative, we have
$$\|b(u^{n})\|_{2,\Omega}^{2}-\|b(u^{n-1})\|_{2,\Omega}^{2}+\|b(u^{n})-b(u^{n-1})\|_{2,\Omega}^{2}$$
$$+2\tau\int_{\Omega}[b'(u^{n})a( \nabla u^{n})\nabla u^{n}+b'(u^{n})K(\nabla u^{n})\nabla u^{n}]dx\leq 2\tau\int_{\Omega}|f_{n}(x)||b(u^{n})|dx$$
Using the assumptions \eqref{eq100}, \eqref{eqa3}, \eqref{eqPhi1}  and using the Young inequality, we obtain
$$\|b(u^{n})\|_{2,\Omega}^{2}-\|b(u^{n-1})\|_{2,\Omega}^{2}+\|b(u^{n})-b(u^{n-1})\|_{2,\Omega}^{2}+2\nu\tau b_{0}\int_{\Omega}\varphi(x,|\nabla u^{n}|)dx$$
$$\hspace{1cm}\leq 2\tau\int_{\Omega}|f_{n}(x)||b(u^{n})|dx+2\tau b_{1}\int_{\Omega}[\varphi(x, \frac{u^{n}}{\lambda})+\varphi(x,|\nabla u^{n}|)]dx$$
$$\leq 2\tau\int_{\Omega}|f_{n}(x)||b(u^{n})|dx+4\tau b_{1}\nu_{0}\int_{\Omega}\varphi(x,|\nabla u^{n}|)dx$$
Choosing  $\nu_{0}$ such that $\nu>\frac{2b_{1}}{b_{0}}\nu_{0}=4\nu_{0}$, and summation  for all $ (n=1,2,...,N)$ implies
\[\|b(u^{n})\|_{2,\Omega}^{2}+\sum_{j=1}^{n}\|b(u^{j})-b(u^{j-1})\|_{2,\Omega}^{2}+2\tau(\nu b_{0}-2b_{1}\nu_{0})\sum_{j=1}^{n}\int_{\Omega}\varphi(x,|\nabla u^{j}|)dx\]
\[\leq 2\tau\sum_{j=1}^{n}\int_{\Omega}|f_{n}(x)||b(u^{n})|dx+\|b(u^{0})\|_{2,\Omega}^{2}.\]
Taking $\|b(u^{n})\|_{2,\Omega}=\max_{j=1,...,N}\|b(u^{j})\|_{2,\Omega}$ and
 $\tau\sum_{j=1}^{n}\|f_{n}\|^{2}_{L^{1}([0,T];L^{2}(\Omega))}\leq \|f\|^{2}_{L^{1}([0,T];L^{2}(\Omega))}$ and using Holder inequality, we obtain
\[\|b(u^{n})\|_{2,\Omega}^{2}+\sum_{j=1}^{n}\|b(u^{j})-b(u^{j-1})\|_{2,\Omega}^{2}+2\tau(\nu b_{0}-2b_{1}\nu_{0})\sum_{j=1}^{n}\int_{\Omega}\varphi(x,|\nabla u^{j}|)dx\]
\[\leq \frac{1}{\epsilon^{2}}\|f\|^{2}_{L^{1}([0,T];L^{2}(\Omega))}+\epsilon^{2} \|b(u^{n})\|_{2,\Omega}^{2}+\|b(u^{0})\|_{2,\Omega}^{2},\]
for any $0<\epsilon<1$.
From where we have \eqref{eqUnf-bund-Sol}.\\
For the second inequality we use only  \eqref{eqb} to get $b_{0}|u^{n}-u^{n-1}|\leq |b(u^{n})-b(u^{n-1})|$ and by \eqref{eqUnf-bund-Sol}, we deduce  \eqref{eqnormUn}.
\end{proof}
\end{enumerate}
%%%%%%%%%%%%%%%%%%%%%%%%%%%%%%%%%%%%%%%%%%%%%%%%%%%%%%%%
\section{Existence via convergence of approximate solutions}
Let consider a two sequences $\{m_{l}\}_{l\in \NN}$ and $\{ N_{\ell}\}_{\ell\in \NN}$ such that $m_{\ell}\rightarrow \infty$, $N_{\ell}\rightarrow \infty$ as  $\ell\rightarrow \infty$ and assume that $\tau_{\ell}\leq\tau_{0}<1$ for all $\ell\in \NN$.\\
\textbf{Construction of approximate solutions ${(u_\ell)}$:} We consider a sequence $\{u^{0}_{\ell}\}_{\ell\in \NN}$ of approximation of the initial datum value such that $u_{\ell}^{0}\in V_{m}$ and
\begin{equation}\label{equ0}
    u^{0}_{\ell}\rightarrow u_{0} \quad \mbox{in}\quad L^{2}(\Omega) \quad \mbox{as}\quad \ell\rightarrow \infty.
\end{equation}
 We construct approximate solutions on the whole time interval as
follows:
\begin{itemize}
\item For the parabolic part, from  the discrete solution $\{u^{n}\}_{n=1}^{N}$ corresponding to the time step size $\tau_{\ell}=\frac{T}{N_{\ell}}$.\\
 Let $\widehat{u}_{l}$ denote the linear spline interpolating $(t_{0}=0,b(u_{\ell}^{0})),(t_{1},b(u_{\ell}^{1})),...,(t_{N_{\ell}},b(u_{\ell}^{N_{\ell}}))$, i.e.
$$\widehat{u}_{\ell}(t)=b(u_{\ell}^{n-1})+\frac{b(u_{\ell}^{n})-b(u_{\ell}^{n-1})}{\tau_{\ell}}(t-t_{n-1})\quad \mbox{for}\quad t\in [t_{n-1},t_{n}],\, (n=1,2,...,N_{\ell}).$$
\item For the elliptic part and source data,  let $u_{\ell}$ denote the piecewise constant function such that
$$u_{\ell}(.,t)=u_{\ell}^{n} \quad \mbox{if}\quad t\in (t_{n-1},t_{n}],\quad (n=1,2,...,N_{\ell}), \quad u_{\ell}(.,0)=u_{\ell}^{1}.$$
And we use the piecewise constant in time approximation $f_{\ell}$ defined by
$$f_{\ell}(.,t)=f(.,t_{n}) \quad \mbox{if}\quad t\in (t_{n-1},t_{n}],\, (n=1,2,...,N_{\ell}), \quad f_{\ell}(.,0)=f(.,t_{1}).$$
\end{itemize}
\subsection{Convergence of the numerical solution $(u_\ell)$} 
The first result of this paper can be summarized by the following Lemma.
\begin{lemma}\label{Lemm-Cvrgn-Approxim-Sol}
Let $u_{0}\in L^{2}(\Omega)$ and $f\in L^{1}(0,T;L^{2}(\Omega))$. Consider the numerical solution of \eqref{eq00000} by the scheme \eqref{eqNumMethod} on a sequence of finite dimensional subspaces such that \eqref{restriOperat-Rm} is satisfied, and time step sizes which tend to zero and are bounded away from one. For the approximation of the initial value, assume \eqref{equ0}.\\
Then, there is a subsequence, denoted by $\ell'$, and element $u\in L^{\infty}(0,T;L^{2}(\Omega))$ with $\nabla u\in (\mathcal{L}_{\varphi}(Q))^{d}$ and $\gamma_{0}u(.,t)=0$ for almost all $t\in(0,T)$, $z\in L^{2}(\Omega)$, $\alpha\in (\mathcal{L}_{\overline{\varphi}}(Q))^{d}$,  such that, as $\ell\rightarrow \infty$,
\begin{equation}\label{eqL1}
  b(u_{\ell})-\widehat{u}_{\ell}\rightarrow 0 \ \mbox{in}\ L^{2}(Q);\  b(u_{\ell'}), \widehat{u}_{\ell'}\rightharpoonup b(u) \ \mbox{in}\ L^{\infty}(0,T;L^{2}(\Omega)),
\end{equation}
  \begin{equation}\label{eqL2}
  \widehat{u}_{\ell'}(.,T)=b(u_{\ell'}(.,T))\rightharpoonup b(z) \ \mbox{in}\quad L^{2}(\Omega);\ \nabla u_{\ell'}\rightharpoonup \nabla u \ \mbox{in}\  (L_{\varphi}(Q))^{d},
 \end{equation}
\begin{equation}\label{eqL3}
K( u_{\ell'})\rightharpoonup K(u)
\ \mbox{ weakly-* in}\ (L_{\overline{\varphi}}(Q))^{d};\ a(.,\nabla u_{\ell'})\rightharpoonup \alpha \ \mbox{ weakly-* in}\ (L_{\overline{\varphi}}(Q))^{d}.
\end{equation}
\end{lemma}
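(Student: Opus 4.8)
The plan is to extract convergent subsequences from the a priori bounds of Theorem \ref{Uniformbounded-discret-solution} and then identify the weak limits. First I would translate the discrete estimates into bounds on the time-continuous interpolants: the bound on $\|b(u^n)\|_{2,\Omega}$ gives that $\widehat{u}_\ell$ is bounded in $L^\infty(0,T;L^2(\Omega))$ and, since $b$ is bi-Lipschitz by \eqref{eqb}, so is $b(u_\ell)$; the sum $\sum_j \|b(u^j)-b(u^{j-1})\|_{2,\Omega}^2$ controls $\|\partial_t \widehat{u}_\ell\|_{L^2(Q)}$ after multiplying by $\tau_\ell$, and the difference $b(u_\ell)-\widehat{u}_\ell$ on each subinterval $[t_{n-1},t_n]$ equals $\frac{b(u^n)-b(u^{n-1})}{\tau_\ell}(t-t_n)$ in absolute value $\le |b(u^n)-b(u^{n-1})|$, so $\|b(u_\ell)-\widehat{u}_\ell\|_{L^2(Q)}^2 \le \tau_\ell \sum_j \|b(u^j)-b(u^{j-1})\|_{2,\Omega}^2 \to 0$. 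This yields \eqref{eqL1} once we pass to a subsequence converging weakly-$*$ in $L^\infty(0,T;L^2(\Omega))$; the two interpolants share the same weak limit, which we christen $b(u)$ (with $u$ recovered by applying the continuous inverse $b^{-1}$, using that $b(u_\ell)\to b(u)$ strongly enough after a further diagonal/compactness argument à la Aubin–Lions adapted to Musielak spaces).

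Next I would handle the gradient term. The estimate $\sum_j \tau_\ell \int_\Omega \varphi(x,|\nabla u^j|)\,dx \le C$ means exactly that $\int_Q \varphi(x,|\nabla u_\ell|)\,dxdt \le C$ uniformly in $\ell$, so Lemma \ref{Lembourness} applies componentwise to $\nabla u_\ell$: there is $\xi \in (\mathcal{L}_\varphi(Q))^d$ and a subsequence with $\nabla u_{\ell'} \rightharpoonup \xi$ weakly in $L^1(Q)$, and in fact for $\sigma(\Pi L_\varphi, \Pi L_{\overline{\varphi}})$. One then checks $\xi = \nabla u$ by testing against smooth compactly supported vector fields and passing to the limit in the integration-by-parts identity, using the weak $L^2$ convergence of $u_\ell$ (which follows from Poincaré, Lemma \ref{lempoincare}, bounding $\int_Q \varphi(x,|u_\ell|)$ and then the $L^2$ bound) to move the derivative. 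The boundary condition $\gamma_0 u(\cdot,t)=0$ a.e. is inherited because each $u^n\in V_m \subset V$ has zero trace and the trace operator is compatible with the weak limit on the relevant space. For $a(\cdot,\nabla u_{\ell'})$ and $K(u_{\ell'})$: by growth condition \eqref{eqa1}, $\int_Q \overline{\varphi}(x,|a(x,\nabla u_{\ell'})|)\,dxdt$ is controlled via the Young-type inequality $\overline{\varphi}(x,\overline{\varphi}^{-1}\varphi(x,t)) \le \varphi(x,t)$ combined with the uniform bound on $\int_Q \varphi(x,|\nabla u_{\ell'}|)$, hence Lemma \ref{Lembourness} (applied with $\overline{\varphi}$) gives a weak-$*$ limit $\alpha \in (\mathcal{L}_{\overline{\varphi}}(Q))^d$; similarly \eqref{eqPhi1} together with the bound on $\int_Q \varphi(x,|\nabla u_{\ell'}|)$ (via Poincaré relating $u$ and $\nabla u$ in the modular sense) yields boundedness of $\int_Q \overline{\varphi}(x,|K(u_{\ell'})|)\,dxdt$ and hence a weak-$*$ limit for $K(u_{\ell'})$.

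The genuinely delicate points are, first, identifying the weak limit of $K(u_{\ell'})$ as $K(u)$ rather than merely some $\beta \in (\mathcal{L}_{\overline{\varphi}}(Q))^d$: this requires $u_{\ell'} \to u$ strongly (a.e. in $Q$, up to subsequence) so that the continuity of $K$ lets us pass to the limit, and then a Vitali/De la Vallée-Poussin argument using the uniform modular bound on $\overline{\varphi}(x,|K(u_{\ell'})|)$ to upgrade a.e. convergence to weak-$L^1$ convergence to the right limit. The strong/a.e. convergence of $u_{\ell'}$ in turn comes from the compactness built into \eqref{eqL1}: $\widehat{u}_{\ell'}\to b(u)$ strongly in, say, $L^2(Q)$ via an Aubin–Lions-type argument (time-derivative bound plus spatial compactness of $V \hookrightarrow L^2(\Omega)$, the latter from $\varphi$-Poincaré and a Rellich-type embedding), $b(u_{\ell'})\to b(u)$ strongly by \eqref{eqL1}, and then $u_{\ell'}=b^{-1}(b(u_{\ell'}))\to b^{-1}(b(u))=u$ a.e. by continuity of $b^{-1}$ and the strict monotonicity \eqref{eqb}. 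Second, pinning down $z$: the trace-in-time identity $\widehat{u}_{\ell'}(\cdot,T)=b(u_{\ell'}(\cdot,T))=b(u_{\ell'}^{N_{\ell'}})$ is bounded in $L^2(\Omega)$ by \eqref{eqUnf-bund-Sol}, so extracts a weak limit we call $b(z)$; showing $z$ is consistent with $u$ at $t=T$ in the weak formulation is deferred to the later existence proof and need not be resolved here. I expect the main obstacle to be the Aubin–Lions-type compactness in the Musielak setting — because $L_\varphi$ need not be reflexive and $L_\varphi(Q)\ne L_\varphi(0,T;L_\varphi(\Omega))$, one cannot quote the standard theorem and must instead exploit the explicit piecewise structure of the interpolants together with the internal-approximation property \eqref{restriOperat-Rm} and the modular convergence lemmas (Lemmas \ref{LemmaModulConv}, \ref{Lembourness}) to produce the strong convergence by hand.
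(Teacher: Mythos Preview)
Your overall plan matches the paper's proof closely: both translate the bounds of Theorem~\ref{Uniformbounded-discret-solution} to the interpolants, compute $\|b(u_\ell)-\widehat{u}_\ell\|_{L^2(Q)}^2$ as a multiple of $\tau_\ell\sum_n\|b(u^n)-b(u^{n-1})\|_{2,\Omega}^2$ (the paper obtains the exact factor $\tfrac{\tau_\ell}{3}$), extract weak-$*$ convergent subsequences in $L_\varphi$ via the duality $L_\varphi=(E_{\overline\varphi})^*$ and separability of $E_{\overline\varphi}$, identify $\xi=\nabla u$ by testing against $\mathcal{C}_c^\infty(\Omega)\otimes\mathcal{C}_c^\infty(0,T)$, and bound $\int_Q\overline\varphi(x,|a(\nabla u_\ell)|)$ and $\int_Q\overline\varphi(x,|K(u_\ell)|)$ through \eqref{eqa1}, \eqref{eqPhi1}, and the Poincar\'e inequality \eqref{eq100}. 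The identification of the weak-$*$ limit $\varpi$ as $b(u)$ is simpler than you suggest: since $b$ is a bi-Lipschitz bijection of $\RR$ by \eqref{eqb}, one just sets $u=b^{-1}(\varpi)$ pointwise; no compactness is needed for that step alone.

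There is, however, one concrete error in your bookkeeping. You write that $\sum_j\|b(u^j)-b(u^{j-1})\|_{2,\Omega}^2$ controls $\|\partial_t\widehat{u}_\ell\|_{L^2(Q)}$ ``after multiplying by $\tau_\ell$'', but in fact
\[
\|\partial_t\widehat{u}_\ell\|_{L^2(Q)}^2=\sum_{n=1}^{N_\ell}\tau_\ell\Big\|\frac{b(u^n)-b(u^{n-1})}{\tau_\ell}\Big\|_{2,\Omega}^2=\frac{1}{\tau_\ell}\sum_{n=1}^{N_\ell}\|b(u^n)-b(u^{n-1})\|_{2,\Omega}^2,
\]
so the a~priori estimate yields only $\|\partial_t\widehat{u}_\ell\|_{L^2(Q)}^2\le C/\tau_\ell$, which blows up. This invalidates your stated input to the Aubin--Lions argument. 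The usable time-regularity comes instead from the scheme itself: reading \eqref{eqNumMethod} (equivalently \eqref{eqTheo1}) as an identity for $\partial_t\widehat{u}_\ell$ against $v\in V_{m_\ell}$ shows that $\partial_t\widehat{u}_\ell$ is uniformly bounded in a dual space of the type $L^1(0,T;L^2(\Omega))+W^{-1}L_{\overline\varphi}$, and it is this estimate that one must feed into a compactness lemma. The paper, for its part, does not carry out any such argument: at the end of the proof it simply asserts ``as we have $u_\ell\to u$ a.e.\ in $Q$'' and then combines continuity of $K$ with the Lipschitz hypothesis \eqref{eqPhi2} to identify the weak limit of $K(u_\ell)$ with $K(u)$. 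So your instinct that this identification hides a genuine compactness step is well founded and actually goes beyond what the paper writes down; just replace the incorrect $L^2(Q)$ bound on $\partial_t\widehat{u}_\ell$ by the dual-space estimate coming from the equation.
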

\textbf{Proof of Lemma \ref{Lemm-Cvrgn-Approxim-Sol}}
\begin{enumerate}
\item  By \eqref{equ0}, the sequence $\{u_{\ell}^{0}\}$ is bounded in $L^{2}(\Omega)$. % Therefore, the right-hand side of the a priori estimate in Theorem ***** is also bounded.\\
Using the definition of $u_{\ell}$ and $\widehat{u_\ell}$ we obtain
$$\|b(u_{\ell})-\widehat{u}_{\ell}\|^{2}_{2,Q}=\frac{\tau_\ell}{3}\sum_{n=1}^{N_{\ell}}\|b(u^{n})-b(u^{n-1})\|^{2}_{2,\Omega}$$
and by \eqref{eqUnf-bund-Sol}, we have $\lim_{l\rightarrow\infty}\|b(u_{\ell})-\widehat{u}_{\ell}\|^{2}_{2,Q}=0$.\\
Also the definition of the approximate solutions allows us to get
$$\|b(u_{\ell})\|_{L^{\infty}(0,T;L^{2}(\Omega))}=\max_{n=1,2,...,N_{\ell}}\|b(u^{n})\|_{2,\Omega},$$
$$ \|\widehat{u}_{\ell}\|_{L^{\infty}(0,T;L^{2}(\Omega))}=\max_{n=1,2,...,N_{\ell}}\|b(u^{n})\|_{2,\Omega},$$
and the inequality \eqref{eqUnf-bund-Sol} shows the boundedness of $\{b(u_{\ell})\}$ and $\{\widehat{u}_{\ell}\}$ in $L^{\infty}(0,T;L^{2}(\Omega))$.% which is the dual of the separable Banach space $L^{1}(0,T;L^{2}(\Omega))$.
We thus have weak* convergence of a subsequence in $L^{\infty}(0,T;L^{2}(\Omega))$. \\
Since the difference of both the sequences, tends to zero in $L^{2}(Q)$, their limits  must coincide and denoted $\varpi$. The condition \eqref{eqb} allows us to write $\varpi$ in the form $\varpi=p(u)$. That is  $b(u_{\ell'}),\widehat{u}_{\ell'}\rightharpoonup b(u) \, \ \mbox{in}\, \ L^{\infty}(0,T;L^{2}(\Omega)).$
\item  Since $\|\widehat{u}_{\ell}(.,T)\|_{2,\Omega}=\|b(u_{\ell})(.,T)\|_{2,\Omega}=\|b(u^{N_{\ell}})\|_{2,\Omega}$, the a priori estimate in \eqref{eqUnf-bund-Sol} proves the weak convergence of a subsequence of $\{\widehat{u}_{\ell}(.,T)\}$ in $L^{2}(\Omega)$ and also its limit can be written as $b(z)$ where $z\in L^{2}(\Omega)$.\\
%In view of the discrete a priori estimate of gradients of $\{u_{\ell}\}$, we observe that
likewise by definition \eqref{equ0} we have
$$\int_{Q}\varphi(x,|\nabla u_{\ell}|)dxdt=\tau_\ell\sum_{n=1}^{N_{\ell}}\int_{\Omega}\varphi(x,|\nabla u^{n}|)dx$$
 is uniformly bounded (see \eqref{eqUnf-bund-Sol})). However, from the boundedness of the modular boundedness of the Luxemburg
norm follows. Therefore, $\{\nabla u_{\ell}\}\subset (\mathcal{L}_{\varphi}(Q))^d\subseteq (L_{\varphi}(Q))^d$ is bounded with respect to $\|\|_{\varphi,Q}$. Since $(L_{\varphi}(Q))^{*}=E_{\overline{\varphi}}(Q)$ is  separable Banach space, we obtain weak* convergence of a subsequence in $(L_{\varphi}(Q))^d$ to an element $\xi\in (L_{\varphi}(Q))^d$ such that $\nabla u_{\ell'}\rightharpoonup^{*} \xi$.\\
   On the other hand we know that $\mathcal{C}_{c}^{\infty}(\Omega)\otimes \mathcal{C}_{c}^{\infty}(0;T)\subset E_{\overline{\varphi}}(Q)$, then for all $g\in \mathcal{C}_{c}^{\infty}(\Omega)$, $\psi\in \mathcal{C}_{c}^{\infty}(0;T)$ we get
$$\int_{Q}\xi g\psi dxdt=\lim_{\ell'\rightarrow\infty}\int_{Q}\nabla u_{\ell'} g\psi dxdt=\lim_{\ell'\rightarrow\infty}\int_{Q} u_{\ell'}\nabla g\psi dxdt=\int_{Q} u\nabla g\psi dxdt$$
since $u_{\ell'}\rightharpoonup u $ weakly in $ L^{\infty}(0,T;L^{2}(\Omega)),$ thus $\xi=\nabla u$ and in view of Lemma \ref{Lembourness}, we  get $\nabla u \in (\mathcal{L}_{\varphi}(\Omega))^d$.\\
Since the trace of $u_{\ell'}$ is zero and,
$$\int_{Q}\nabla u_{\ell'} z dxdt\rightarrow \int_{Q}\nabla u z dxdt, \quad \int_{Q}u_{\ell'} \nabla  z dxdt\rightarrow \int_{Q}u \nabla  z dxdt$$
for all $z\in  L^{\infty}(0,T;W^{1,q}(\Omega))$ with $q\geq2$  as $\ell\rightarrow\infty$, also the limit $u$ must have vanishing trace for all $t\in(0,T)$.\\
\item
Finally in view of \eqref{eqUnf-bund-Sol} and \eqref{eqa1} we have
%A similar argumentation as for $\{\nabla u_{l}\}$ proves the remaining assertion for $\{a(\nabla u_{l})\}$  and  $\{K(u_{l})\}$ since

$$\int_{Q}\overline{\varphi}(x,|a(x,|\nabla u_{\ell}|)|)dxdt\leq\int_{Q}\varphi(x,|\nabla u_{\ell}|)dxdt=\tau_\ell\sum_{n=1}^{N_{\ell}}\int_{\Omega}\varphi(x,|\nabla u^{n}_{\ell}|)dx$$
and by \eqref{eqPhi1} and \eqref{eq100} we have
$$\int_{Q}\overline{\varphi}(x,|K(u_{\ell})|)dxdt\leq \int_{Q}\varphi(x,\frac{\vert u_{\ell}\vert }{\lambda})dxdt\leq\tau_\ell C\sum_{n=1}^{N_{\ell}}\int_{\Omega}\varphi(x,|\nabla u^{n}_{\ell}|)dx$$
 are uniformly bounded. We deduce that there exists $\alpha$ and $\beta$ in $ (L_{\varphi}(Q))^{d}$ such that $a(., \nabla u_{\ell'})\rightharpoonup  \alpha$ and $K(u_{\ell'})\rightharpoonup  \beta$ weakly in $(L_{\varphi}(Q))^{d}$ for a subsequence. And by Lemma \ref{Lembourness}, $\alpha\in (\mathcal{L}_{\varphi}(Q))^{d}$ and $\beta\in (\mathcal{L}_{\varphi}(Q))^{d}$.\\
 On the other hand using \eqref{eqPhi2} we get $|K(u_{\ell})-K(u)|\leq \nu_{1}|u_{\ell}-u|$ and as we have $u_{\ell}\rightarrow u$ a.e. in $Q$ and $K$ is continuous, we obtain $\beta=K(u)$.
\end{enumerate}
\begin{remark}~\\
We omit writing $\ell'$ for the sequence from Lemma \ref{Lemm-Cvrgn-Approxim-Sol} and $x$ from $a(x,\nabla u)$ for the sake of simplicity.
\end{remark}
\begin{theorem}(Convergence of approximate solutions)\label{Cvrgn-Approxim-Sol}~\\
Let $u_{0}\in L^{2}(\Omega)$ and $f\in L^{1}(0,T;L^{2}(\Omega))$. Consider the numerical solution of \eqref{eq00000} by the scheme \eqref{eqNumMethod} on a sequence of finite dimensional subspaces such that \eqref{restriOperat-Rm} is satisfied, and time step sizes which tend to zero and are bounded away from one. For the approximation of the initial value, assume \eqref{equ0}.\\
Then there are a subsequences denoted by  $\{u_{\ell'}\}$ and $\{\widehat{u}_{\ell'}\}$ of piecewise constant in time and piecewise linear in time prolongations, respectively, of the numerical solutions converging weakly-$*$ in $L^{\infty}(0,T;L^{2}(\Omega))$ to an exact solution $u\in \mathcal{C}_{w}([0,T];L^{2}(\Omega))$ to \eqref{eq0000} and to $p(u)$ respectively. Moreover, and
 \begin{equation}\label{eqT1}
b(u_{\ell'}(.,T))=\widehat{u}_{\ell'}(.,T)\rightharpoonup b(u(.,T))\ \mbox{ weakly in}\  L^{2}(\Omega),
\end{equation}
\begin{equation}\label{eqT2}
  \nabla u_{\ell'}\rightharpoonup \nabla u \ \mbox{ weakly* in}\ (L_{\varphi}(Q))^{d} \ \mbox{and}\ \nabla u\in (\mathcal{L}_{\varphi}(Q))^{d}, \end{equation}
  \begin{equation}\label{eqT3}
  a(\nabla u_{\ell'})\rightharpoonup a(\nabla u)\ \mbox{ weakly* in}\ (L_{\overline{\varphi}}(Q))^{d}\ \mbox{and}\ a(\nabla u)\in (\mathcal{L}_{\overline{\varphi}}(Q))^{d},
  \end{equation}
  % \begin{equation}\label{eqT4}
  %K( u_{l'})\rightharpoonup K( u) \quad \mbox{ weakly* in}\quad (L_{\overline{M}}(Q))^{d} \quad \mbox{to}\quad K( u)\in (\mathcal{L}_{\overline{M}}(Q))^{d}.
  %\end{equation}
\end{theorem}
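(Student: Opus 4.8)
The strategy is to pass to the limit $\ell'\to\infty$ in a time-integrated reformulation of the scheme \eqref{eqNumMethod}, thereby recovering the weak formulation of \eqref{eq00000} for the limit $u$ together with the weak-$*$ limits supplied by Lemma \ref{Lemm-Cvrgn-Approxim-Sol}, and then to remove the two remaining identifications: that the terminal weak-$*$ limit $z$ equals $u(\cdot,T)$ and that the flux limit $\alpha$ equals $a(\nabla u)$. Convergence \eqref{eqT2} is already contained in Lemma \ref{Lemm-Cvrgn-Approxim-Sol}.

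\textbf{Step 1: the limit equation.} First I would sum \eqref{eqNumMethod} over the time levels, test against a field $v\,\psi$ with $v\in V_{m_\ell}$ and $\psi\in\mathcal{C}^{\infty}([0,T])$, and integrate by parts in time so that the discrete difference quotient is expressed through $\partial_t\widehat{u}_{\ell}$ plus boundary terms at $t=0$ and $t=T$. Using Lemma \ref{Lemm-Cvrgn-Approxim-Sol}: the parabolic term passes to the limit since $\widehat{u}_{\ell'}\rightharpoonup b(u)$ weakly-$*$ in $L^{\infty}(0,T;L^{2}(\Omega))$; the diffusion term passes since $a(\nabla u_{\ell'})\rightharpoonup\alpha$ tested against $\nabla v\,\psi\in(E_{\varphi}(Q))^{d}$; the source term passes since $f_{\ell}\to f$ strongly in $L^{1}(0,T;L^{2}(\Omega))$, a standard property of the piecewise-constant-in-time approximation; and the transport term $\int_{Q}K(u_{\ell})\nabla v\,\psi$ is handled using the a.e.\ convergence $u_{\ell'}\to u$ in $Q$ (available from the a priori bounds \eqref{eqnormUn}, \eqref{eqUnf-bund-Sol} via a Simon--Aubin--Lions-type compactness argument, as already invoked in Lemma \ref{Lemm-Cvrgn-Approxim-Sol}) together with the continuity of $K$ and the Lipschitz bound \eqref{eqPhi2}, so that $K(u_{\ell'})\to K(u)$ strongly enough — in $L^{1}(Q)$, with the modular bound of Lemma \ref{Lembourness} providing the needed equi-integrability — for the product with the fixed field $\nabla v\,\psi$ to converge. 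Using the restriction operators \eqref{restriOperat-Rm} the identity extends from $v\in V_{m_\ell}$ to all $v\in V$, and by the density Lemma \ref{LemWWWW} to all $w\in\mathcal{W}$; this shows that $u$, with $\nabla u\in(\mathcal{L}_{\varphi}(Q))^{d}$ and vanishing trace, satisfies the weak formulation of \eqref{eq00000} with $\alpha$ and $K(u)$ in the flux and with $b(u)(\cdot,0)=b(u_{0})$. Finally, since $b(u)\in L^{\infty}(0,T;L^{2}(\Omega))$ and $\partial_t b(u)=\operatorname{div}(\alpha+K(u))+f$ lies in a space of the form $L^{1}(0,T;W^{-1}L_{\overline{\varphi}}(\Omega))+L^{1}(0,T;L^{2}(\Omega))$, a standard interpolation argument gives $b(u)\in\mathcal{C}_{w}([0,T];L^{2}(\Omega))$, hence $u\in\mathcal{C}_{w}([0,T];L^{2}(\Omega))$.

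\textbf{Step 2: the terminal value.} Choosing $\psi$ with $\psi(T)\neq0$ and comparing the boundary contribution at $t=T$: in the limit it equals $\int_{\Omega}b(u(\cdot,T))v\,\psi(T)\,dx$ (meaningful because $u\in\mathcal{C}_{w}([0,T];L^{2}(\Omega))$), while it is also the limit of $\int_{\Omega}\widehat{u}_{\ell'}(\cdot,T)v\,\psi(T)\,dx\to\int_{\Omega}b(z)v\,\psi(T)\,dx$. Hence $b(z)=b(u(\cdot,T))$, and since $b$ is strictly increasing, $z=u(\cdot,T)$, which is \eqref{eqT1}.

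\textbf{Step 3: $\alpha=a(\nabla u)$ — the main obstacle.} As reflexivity and the $\Delta_{2}$-condition are unavailable, one cannot pass to the limit in the nonlinear term directly, so I would use a Minty--Browder argument in modular form. Testing \eqref{eqNumMethod} with $v=u^{n}$, summing over $n$, and using the convex primitive $\Psi$ of $b^{-1}$ together with the elementary inequality $(b(u^{n})-b(u^{n-1}))\,u^{n}\ge\Psi(b(u^{n}))-\Psi(b(u^{n-1}))$ to telescope the discrete time term, one obtains
\[
\int_{Q}a(\nabla u_{\ell})\nabla u_{\ell}\,dx\,dt\le\int_{Q}f_{\ell}u_{\ell}\,dx\,dt-\int_{Q}K(u_{\ell})\nabla u_{\ell}\,dx\,dt-\int_{\Omega}\Psi(b(u_{\ell}(\cdot,T)))\,dx+\int_{\Omega}\Psi(b(u_{\ell}^{0}))\,dx .
\]
On the other hand, regularizing $u$ into an admissible test function via Lemma \ref{LemWWWW} and using the chain rule $\frac{d}{dt}\int_{\Omega}\Psi(b(u))\,dx=\langle\partial_t b(u),u\rangle$, the limit equation tested with $u$ gives
\[
\int_{Q}\alpha\,\nabla u\,dx\,dt=\int_{Q}f u\,dx\,dt-\int_{Q}K(u)\nabla u\,dx\,dt-\int_{\Omega}\Psi(b(u(\cdot,T)))\,dx+\int_{\Omega}\Psi(b(u_{0}))\,dx .
\]
Taking $\limsup_{\ell'}$ in the first relation — $\int_{Q}f_{\ell}u_{\ell}\to\int_{Q}f u$ (strong $L^{1}(0,T;L^{2})$ against weak-$*$ $L^{\infty}(0,T;L^{2})$); $\int_{\Omega}\Psi(b(u_{\ell'}^{0}))\,dx\to\int_{\Omega}\Psi(b(u_{0}))\,dx$ by \eqref{equ0}; $\int_{\Omega}\Psi(b(u_{\ell'}(\cdot,T)))\,dx$ bounded below by weak lower semicontinuity (convexity of $\Psi$ and \eqref{eqT1}); and $\int_{Q}K(u_{\ell'})\nabla u_{\ell'}\to\int_{Q}K(u)\nabla u$ from the a.e.\ convergence of $u_{\ell'}$ and $\nabla u_{\ell'}\rightharpoonup\nabla u$ — yields $\limsup_{\ell'}\int_{Q}a(\nabla u_{\ell'})\nabla u_{\ell'}\,dx\,dt\le\int_{Q}\alpha\,\nabla u\,dx\,dt$. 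Then, for any $w$ with $\nabla w\in(\mathcal{L}_{\varphi}(Q))^{d}$, monotonicity \eqref{eqa2} gives $\int_{Q}(a(\nabla u_{\ell'})-a(\nabla w))(\nabla u_{\ell'}-\nabla w)\,dx\,dt\ge0$; expanding and taking $\limsup$ with the weak-$*$ convergences and the bound just obtained leads to $\int_{Q}(\alpha-a(\nabla w))(\nabla u-\nabla w)\,dx\,dt\ge0$ for every such $w$. Choosing $w=u-\lambda\theta$ (after approximation making $\nabla w$ admissible), dividing by $\lambda>0$, letting $\lambda\to0$, and using continuity of $a$ together with the Fatou/Vitali arguments behind Lemma \ref{Lembourness}, one concludes $\alpha=a(\nabla u)$ a.e.\ in $Q$, with $a(\nabla u)\in(\mathcal{L}_{\overline{\varphi}}(Q))^{d}$; this is \eqref{eqT3} and closes the proof.

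\textbf{Expected difficulties.} The delicate points are: (i) justifying that $u$ itself — not merely smooth functions — is admissible as a test function in the limit equation, which is exactly the role of Lemma \ref{LemWWWW}; (ii) passing to the limit in the two ``weak $\times$ weak'' products $\int_{Q}a(\nabla u_{\ell})\nabla u_{\ell}$ and $\int_{Q}K(u_{\ell})\nabla u_{\ell}$ in the absence of $\Delta_{2}$ and reflexivity, which forces one to exploit the strong $L^{1}$/a.e.\ compactness of $\{u_{\ell}\}$ coming from \eqref{eqnormUn}; and (iii) the discrete-to-continuous transfer of the energy identity, where the convexity inequality for $\Psi$ substitutes for the chain rule at the discrete level. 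The Minty argument of Step 3 is the heart of the proof.
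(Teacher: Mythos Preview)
Your overall strategy matches the paper's: pass to the limit in the scheme using Lemma~\ref{Lemm-Cvrgn-Approxim-Sol}, obtain the limit equation for all $w\in\mathcal{W}$, identify the terminal value, and then run a Minty-type argument to identify $\alpha=a(\nabla u)$. Your handling of the parabolic term is different and in one respect cleaner: you test the discrete scheme with $v=u^{n}\in V_{m}$ and telescope via the convex primitive $\Psi$ of $b^{-1}$, whereas the paper tests with $b(u^{n})$ and uses $(a-b)a\ge\tfrac12(a^{2}-b^{2})$; your choice avoids the implicit assumption that $b(u^{n})$ lies in the finite-dimensional space $V_{m}$. For the limit equation the paper does not test with $u$ directly but with the centred Steklov average $S_{h}u(\cdot,t)=\tfrac{1}{2h}\int_{t-h}^{t+h}b(u(\cdot,s))\,ds$ and lets $h\to0$; note that your appeal to Lemma~\ref{LemWWWW} does not cover this step, since that lemma provides spatial approximation only and the missing ingredient is time regularisation to make $u$ (or $b(u)$) admissible in $\mathcal{W}$.

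There is, however, one genuine gap in Step~3. Your Minty argument tests against an arbitrary $w$ with $\nabla w\in(\mathcal{L}_{\varphi}(Q))^{d}$ and then takes $w=u-\lambda\theta$, $\lambda\to0$. In the Musielak setting without $\Delta_{2}$ this does not go through: for a general such $w$ one cannot guarantee $a(\nabla w)\in (L_{\overline{\varphi}}(Q))^{d}$, and the pairing $\int_{Q}a(\nabla w)(\nabla u_{\ell'}-\nabla w)\,dxdt$ may fail to make sense or to pass to the limit. The paper (following Gwiazda et al.) restricts the comparison field to $\eta\in(L^{\infty}(Q))^{d}$, so that $a(\eta)\in(E_{\overline{\varphi}}(Q))^{d}$ by continuity of $a$, obtains $\int_{Q}(a(\eta)-\alpha)(\eta-\nabla u)\,dxdt\ge0$ only for bounded $\eta$, and then closes the argument by a truncation: on $Q_{k}=\{|\nabla u|>k\}$ one replaces $\nabla u$ by $0$, on $Q\setminus Q_{j}$ one perturbs by $\lambda\zeta$ with $\zeta\in(L^{\infty}(Q))^{d}$, lets $i\to\infty$, then $\lambda\to0$ via monotonicity and dominated convergence, and finally chooses $\zeta$ to be the sign of $a(\nabla u)-\alpha$. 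This ``modified Minty trick'' is the device you are missing; without it the identification $\alpha=a(\nabla u)$ is not justified in the non-reflexive framework.
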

\textbf{Proof of Theorem \ref{Cvrgn-Approxim-Sol}}\\
\textbf{Step 1:}~\\
Remark that by definition of $\widehat{u}_{\ell}$, the numerical scheme \eqref{eqNumMethod} can be written as
\begin{equation}\label{eqTheo1}
    \int_{\Omega}\Big[\partial_{t} \widehat{u}_{\ell}v+ (a(\nabla u_{\ell})+K(u_{\ell}))\nabla v\Big] dx= \int_{\Omega}f_{\ell}vdx \quad \mbox{for all}\quad v\in V_{m_{\ell}}
\end{equation}
 this equation holds almost everywhere in $(0,T)$ as well as in the weak sense. This implies
$$-\int_{Q}\widehat{u}_{\ell}R_{m_{l}}v\psi'dxdt+\int_{\Omega}\widehat{u}_{\ell}(.,T)R_{m_{\ell}}v\psi(T)dx-\int_{\Omega}\widehat{u}_{\ell}(.,0)R_{m_{\ell}}v\psi(0)dx$$
$$+\int_{Q}(a(\nabla u_{\ell})+K(u_{\ell}))\nabla R_{m_{\ell}} v\psi dxdt= \int_{Q}f_{\ell}R_{m_{\ell}}v\psi dxdt\ \mbox{for all} \ v\in V, \psi\in \mathcal{C}^{1}([0,T]),$$
where $\widehat{u}_{\ell}(.,T)=b(u^{N_{\ell}})$ and $\widehat{u}_{\ell}(.,0)=b(u^{0}_{\ell})$ and by \eqref{eqb} we have
\[|b(u^{N_{\ell}})-b(z)|\leq b_{1}|u^{N_{\ell}}-z|\ \mbox{and}\ |b(u^{0}_{\ell})-b(u_{0})|\leq b_{1}|u^{0}_{\ell}-u_{0}|\]
 then using Lemma \ref{Lemm-Cvrgn-Approxim-Sol} and \eqref{equ0} we obtain respectively $\widehat{u}_{\ell}(.,T)\rightarrow b(z)$ and  $\widehat{u}_{\ell}(.,0)\rightarrow b(u_{0})$ strongly in $L^{2}(\Omega)$ and also $f_{\ell}\rightarrow f$ strongly $L^{1}(0,T;L^{2}(\Omega))$.\\
Let now tends $\ell$ to $\infty$ for the others terms, we get
\begin{equation}\label{eqtheo1}
    \left\{\begin{array}{c}
      R_{m_{\ell}}v\psi'\rightarrow v\psi'\quad \mbox{in}\quad  L^{1}(0,T; L^{2}(\Omega)), \\
      \hspace{-2cm}R_{m_{\ell}}v\rightarrow v \quad \mbox{in}\quad L^{2}(\Omega),\\
     \hspace{-0.6cm} \nabla R_{m_{\ell}} v\psi\rightarrow \nabla v\psi \quad \mbox{in}\quad (E_{\varphi}(Q))^{d},\\
       R_{m_{\ell}} v\psi\rightarrow  v\psi \quad \mbox{in}\quad  L^{1}(0,T; L^{2}(\Omega)).
         \end{array}%
     \right.
\end{equation}
By appling Lemma \ref{Lemm-Cvrgn-Approxim-Sol} and \eqref{eqtheo1}, we obtain as $\ell\rightarrow\infty$\\
$\displaystyle-\int_{Q}b(u)v\psi'dxdt+\int_{\Omega}b(z)v\psi(T)dx-\int_{\Omega}b(u_{0})v\psi(0)dx$
\begin{equation}\label{eqtheo2}
+\int_{Q}(\alpha+K(u))\nabla  v\psi dxdt= \int_{Q}fv\psi dxdt \quad\mbox{for all} \quad v\in V, \psi\in \mathcal{C}^{1}([0,T]).
\end{equation}
This follows from \eqref{restriOperat-Rm} and the definition of the norm in $V$. Observe that\\ $\|\nabla R_{m_{\ell}} v\psi- \nabla v\psi\|_{\varphi,Q}\leq \|\psi\|_{\infty,[0,T]}\max(1,T)\|\nabla R_{m_{\ell}} v- \nabla v\|_{\varphi,\Omega}$ and $V\hookrightarrow W^{1,1}(\Omega)\cap L^{2}(\Omega)$.\\
Relation \eqref{eqtheo2} implies, by density arguments,
%\begin{equation*}
$$-\int_{Q}b(u)\partial_{t} wdxdt+\int_{\Omega}b(z)w(.,T)dx-\int_{\Omega}b(u_{0})w(.,0)dx$$
%\end{equation*}
\begin{equation}\label{eqtheo3}
+\int_{Q}(\alpha+K(u))\nabla  w dx dt= \int_{Q}fw dx dt \ \mbox{for all} \ w\in \mathcal{W}.
\end{equation}
 Now, remark that the tensor product $V\otimes \mathcal{C}^{1}([0,T])\subset \mathcal{W}$, which shows that \eqref{eqtheo2} is a particular case of \eqref{eqtheo3}. The function $w_{\epsilon}$ that exists in view of Lemma \ref{LemWWWW} for any $w\in \mathcal{W}$ can be approximated, with respect to the strong convergence in $\mathcal{C}^{1}(\overline{Q})$, by a polynomial vanishing at $\partial\Omega\times [0,T]$, which possesses a tensor structure and thus belongs to $V\otimes \mathcal{C}^{1}([0,T])$. For any $u\in L^{\infty}(0,T; L^{2}(\Omega))$, $z, u_{0}\in L^{2}(\Omega)$, $\alpha\in (\mathcal{L}_{\overline{\varphi}}(Q))^{d}$, $f\in \mathcal{C}(0,T; L^{2}(\Omega))$, any $\epsilon>0$ $w\in \mathcal{W}$, there is hence (recalling also the continuous embedding of $W^{1,1}(0,T; L^{2}(\Omega))$ into $\mathcal{C}(0,T; L^{2}(\Omega))$ an element  $w_{\epsilon}\in V\otimes \mathcal{C}^{1}([0,T])$ such that
% \begin{equation*}
$$\displaystyle|\int_{Q}b(u)\partial_{t}(w_{\epsilon}-w)dxdt|+|\int_{\Omega}b(z)(w_{\epsilon}(.,T)-w(.,T))dx|$$
%\end{equation*}
%\begin{equation*}
$$\displaystyle+|\int_{\Omega}b(u_{0})(w_{\epsilon}(.,0)-w(.,0))dx|+|\int_{Q}(\alpha+K(u))\nabla(w_{\epsilon}-w)dxdt|$$
%\end{equation*}
%\begin{equation*}
$$\displaystyle+|\int_{Q}f(w_{\epsilon}-w)dxdt|<\epsilon,$$
%\end{equation*}
On the other hand, since $u\in L^{\infty}(0,T; L^{2}(\Omega))$, with $\nabla u\in (\mathcal{L}_{\varphi}(Q))^{d}\subset L^{1}(0,T; L_{\varphi}(\Omega))$,\\ $\alpha\in (\mathcal{L}_{\overline{\varphi}}(Q))^{d}\subset L^{1}(0,T; L_{\overline{\varphi}}(\Omega))$, and $f\in L^{1}(0,T; L^{2}(\Omega))$, we see that for any $v\in V$ the functions
\[t\mapsto \int_{\Omega}b(u(.,t))vdx,\ t\mapsto \int_{\Omega}\alpha(.,t)\nabla vdx,\ t\mapsto \int_{\Omega}K(u)(.,t)\nabla vdx,\ t\mapsto \int_{\Omega}f(.,t)vdx,\]
 are  in $L^{1}(0,T)$ and with \eqref{eqtheo2}, we get
 \begin{equation}\label{eqTheo3}
    \frac{d}{dt}\int_{\Omega}b(u(.,t))vdx=\int_{\Omega}(f(.,t)vdx-\int_{\Omega}(\alpha(.,t)+K(u)(.,t))\nabla v)dx
 \end{equation}
holds true in the weak sense.\\
 Thus, the function $\displaystyle t\mapsto \int_{\Omega}b(u(.,t))vdx$  is absolutely continuous and since $V$ is dense in $L^{2}(\Omega)$ with respect to the strong convergence in $L^{2}(\Omega)$, we obtain $b(u)\in \mathcal{C}_{w}(0,T; L^{2}(\Omega))$.\\
\textbf{Step 2: Initial and final values}
\begin{itemize}
  \item  We  now prove $b(u(.,0))=b(u_{0})\in L^{2}(\Omega)$.\\
   For any $v\in V$, we have with \eqref{eqTheo1}
   \begin{align*}
    \displaystyle \int_{\Omega}\widehat{u}_{l}^{{0}}R_{m_{\ell}}vdx &=\Big[\int_{\Omega}\widehat{u}_{\ell}(.,t)R_{m_{\ell}}vdx\frac{t-T}{T}\Big]_{t=0}^{T}\\
  \displaystyle & =  \int_{0}^{T}\Big(\int_{\Omega}\partial_{t}\widehat{u}_{\ell}R_{m_{\ell}}vdx\frac{t-T}{T}+ \int_{\Omega}\widehat{u}_{\ell}R_{m_{\ell}}vdx\frac{1}{T}\Big)dt\\
 \displaystyle& = \int_{0}^{T}\Big([\int_{\Omega}f_{\ell}R_{m_{\ell}}vdx-\int_{\Omega}(a(\nabla u_{\ell})+K( u_{\ell}))\nabla R_{m_{\ell}}vdx]\frac{t-T}{T}\\
\displaystyle & + \int_{\Omega}\widehat{u}_{\ell}R_{m_{\ell}}vdx\frac{1}{T}\Big)dt.
   \end{align*}

Pass to the limit, we  obtain with integration by parts and using \eqref{eqTheo3}
$$\begin{array}{cc}
  \displaystyle \int_{\Omega}b(u_{0})v dx = \int_{0}^{T}\Big([\int_{\Omega}fvdx-\int_{\Omega}(\alpha+K(u))\nabla vdx]\frac{t-T}{T}+ \int_{\Omega}p(u)vdx\frac{1}{T}\Big)dt\\
\displaystyle=\Big[\int_{\Omega}b(u)v dx\frac{t-T}{T}\Big]_{t=0}^{T}=\int_{\Omega}b(u(.,0))v dx.
\end{array}%
$$
\item  Changing now the function $t\rightarrow\frac{t-T}{T}$ by $t\rightarrow\frac{t}{T}$ and using
%Using the function $t\rightarrow\frac{t}{T}$ instead of $t\rightarrow\frac{t-T}{T}$,
 the same argumentation as above provides that the limit $p(z)$ of $\widehat{u}_{\ell}(.,T)=b(u_{\ell}(.,T))$ weakly in $L^{2}(\Omega)$, is exactly $b(u(.,T)),$
$$\widehat{u}_{\ell}(.,T)\rightarrow b(z)=b(u(.,T))\in L^{2}(\Omega)\quad \mbox{as}\quad \ell\rightarrow\infty.$$
\end{itemize}
\textbf{Step 3:}\begin{itemize}
     \item  For the convergence of the term $\displaystyle\int_{Q}K( u_{\ell})\nabla b(u_{\ell})dxdt$, we use \eqref{eqPhi2}  and Lemma \ref{Lemm-Cvrgn-Approxim-Sol}, to deduce that $K( u_{\ell})$ converges to $K( u)$ in $(L_{\overline{\varphi}}(Q))^{d}$ and $$\displaystyle\int_{Q}K( u_{\ell})\nabla b(u_{\ell})dxdt\rightarrow \int_{Q}K( u)\nabla b(u)dxdt.$$
%on peut ajouter que $\overline{M}(\frac{|K( u_{l})-K(u)|}{\mu})\leq \overline{M}(\frac{L(x,t)\|u_{l}-u)\|_{L^{\infty}}}{\mu})$ .\\
\item  For the source term,  we know that
$$\int_{Q}f_{\ell}b(u_{\ell})dxdt\rightarrow \int_{Q}fb(u)dxdt \quad \mbox{as}\quad \ell\rightarrow\infty.$$

\item We now show that $\alpha=a(\nabla u).$ \\%and $\beta=K(u)$. \\
To do so, we employ a variant of Minty's monotony trick.\\
 %Unfortunately, a direct application of Minty's  trick is not possible since we are working in spaces which are not reflexive and so we cannot just take the limit $u$ as a test function in the limit equation ***.\\
Using $(a-b)a\geq \frac{1}{2}(a^{2}-b^{2})$, we find
\begin{align*}
  \displaystyle \int_{Q}\partial_{t}\widehat{u}b(u_{\ell})dxdt& =\sum_{n=1}^{N_{\ell}}\int_{\Omega}(b(u^{n})-b(u^{n-1}))b(u^{n})dx\\
  &\geq \frac{1}{2}(\|b(u^{N_{\ell}})\|_{2,\Omega}^{2}-\|b(u^{0}_{\ell})\|_{2,\Omega}^{2})\\
\displaystyle &=\frac{1}{2}(\|b(u_{\ell}(.,T))\|_{2,\Omega}^{2}-\|b(u^{0}_{\ell})\|_{2,\Omega}^{2}),
\end{align*}
which implies, because of the weak lower semi-continuity of the norm, the weak convergence of $u_{\ell}(.,T)$ to $z=u(T)$ in $L^{2}(\Omega)$ and the strong convergence \eqref{equ0},
\begin{equation}\label{eqTheo4}
    \frac{1}{2}(\|b(u(.,T))\|_{2,\Omega}^{2}-\|b(u^{0})\|_{2,\Omega}^{2})\leq \liminf_{\ell\rightarrow\infty}\int_{Q}\partial_{t}\widehat{u}b(u_{\ell})dxdt.
\end{equation}
For all $\eta\in (L^{\infty}(Q))^{d}$  with \eqref{eqb} and \eqref{eqa2},
%$$\int_{Q}a(\nabla u_{l})\nabla u_{l}dxdt\geq \int_{Q}a(\nabla u_{l})\nabla u_{l}dxdt-\int_{Q}(a(\nabla u_{l})-a(\eta))(\nabla u_{l}-\eta)dxdt$$
  $$\int_{Q}a(\nabla u_{\ell})\nabla b(u_{\ell})dxdt\geq b_{0}\int_{Q}a(\nabla u_{\ell})\eta dxdt+b_{0}\int_{Q}a(\eta)(\nabla u_{\ell}-\eta)dxdt.$$
 Remark that $a(\eta)\in (E_{\overline{\varphi}}(Q))^{d}$ since $\eta\in (L^{\infty}(Q))^{d}$ and $a$ is continuous. In the limit, we thus obtain (see again Lemma \ref{Lemm-Cvrgn-Approxim-Sol})
\begin{equation}\label{eqTheo5}
   \liminf_{\ell\rightarrow\infty}\int_{Q}a(\nabla u_{\ell})\nabla u_{\ell}dxdt\geq b_{0}\int_{Q}\alpha\eta dxdt+b_{0}\int_{Q}a(\eta)(\nabla u-\eta)dxdt.
\end{equation}
Now taking $v=u_{\ell}(.,t)\in V_{m_{\ell}}$ in \eqref{eqtheo2} and using \eqref{eqTheo4} and \eqref{eqTheo5}, we obtain
$$\frac{1}{2}(\|b(u(T))\|_{2,\Omega}-\|b(u^{0})\|_{2,\Omega})+b_{0}\int_{Q}\alpha\eta dxdt+b_{0}\int_{Q}a(\eta)(\nabla u_{\ell}-\eta)dxdt$$
\begin{equation}\label{eqTheo6}
+\int_{Q}K( u)\nabla b(u)dxdt\leq \int_{Q}fb(u)dxdt
\end{equation}
%unfortunately, we cannot take $w=u$ due to the lack of regularity in time.*****\\
We therefore consider the centered Steklov average of $u$, given by
$$(S_{h}u)(.,t)=\frac{1}{2h}\int_{t-h}^{t+h}b(u(.,s))ds,\quad t\in [0,T],$$
where $h>0$ and where $u$ is extended by zero outside $[0,T]$. The properties of $b$ and $u$ imply that $S_{h}u\in \mathcal{W}$. It is known that
$$\lim_{h\rightarrow 0}\int_{Q}fS_{h}udxdt=\int_{Q}fb(u)dxdt.$$
On the other hand, we find with \eqref{eqtheo3}\\
\begin{align*}
\displaystyle\int_{Q}fS_{h}udxdt=&-\int_{Q}b(u)\partial_{t}S_{h}udxdt+\int_{\Omega}b(u(.,T))S_{h}u(.,T)dx\\
&\displaystyle-\int_{\Omega}b(u(.,0))S_{h}u(.,0)dx+\int_{Q}(\alpha+K(u))\nabla S_{h}udxdt,
\end{align*}

where $\partial_{t}S_{h}u(.,t)=\frac{b(u(.,t+h))-b(u(.,t-h))}{2h},$ and thus\\
$\displaystyle\int_{Q}b(u)\partial_{t}S_{h}udxdt=\frac{1}{2h}\int_{0}^{T}\int_{\Omega}b(u(.,t))(b(u(.,t+h))-b(u(.,t-h)))dxdt$
\begin{equation}\label{eqTheo7}
=\frac{1}{2h}\int_{0}^{T-h}\int_{\Omega}b(u(.,t))b(u(.,t+h))dxdt-\frac{1}{2h}\int_{h}^{T}\int_{\Omega}b(u(.,t))b(u(.,t-h))dxdt=0
\end{equation}
Moreover, we have
\[\int_{\Omega}b(u(.,T))S_{h}u(.,T)dx=\frac{1}{2h}\int_{T-h}^{T}\int_{\Omega}b(u(.,T))b(u(.,s))dxds\]
\begin{equation}\label{eqTheo8}
    \rightarrow \frac{1}{2}\int_{\Omega}b(u(.,T))^{2}dx=\frac{1}{2}\|b(u(.,T))\|^{2}_{2,\Omega}\quad \mbox{as}\quad h\rightarrow 0.
\end{equation}
Recall here that $u\in\mathcal{C}_{w}([0,T];L^{2}(\Omega))$ and thus $s=T$ is a Lebesgue's point of the mapping $\displaystyle[0,T]\ni s\mapsto \int_{\Omega}b(u(.,T))b(u(.,s))dx$.\\
Similarly, we have
$$\int_{\Omega}b(u(.,0))S_{h}u(.,0)dx\rightarrow \frac{1}{2}\|b(u_{0})\|^{2}_{2,\Omega}\quad \mbox{as}\quad h\rightarrow 0.$$

Finally, we observe that\\
\begin{equation}\label{eqTheo9}
    \begin{array}{ll}
      \displaystyle\int_{Q}\alpha\nabla S_{h}udxdt-\int_{Q}\alpha\nabla b(u)dxdt\\
\displaystyle=\frac{1}{2}\int_{0}^{T}\int_{t-h}^{t+h}\int_{\Omega}\alpha(.,t)\nabla(b(u(.,s))-b(u(.,t)))dxdsdt\\
 \displaystyle   =\frac{1}{2}\int_{-1}^{1}\int_{0}^{T}\int_{\Omega}\alpha(.,t)\nabla(b(u(.,t+rh))-b(u(.,t)))dxdrdt \rightarrow 0 \quad \mbox{as}\quad h\rightarrow 0.
    \end{array}
\end{equation}
Similarly, also  we have
\begin{equation}\label{eqTheo10}
\int_{Q}K(u)\nabla S_{h}udxdt-\int_{Q}K(u)\nabla b(u)dxdt\rightarrow 0 \quad \mbox{as}\quad h\rightarrow 0.
\end{equation}

Since the translation of a function in the Musielak space $L_{\varphi}(Q)$ is continuous with respect to the weak convergence in $E_{\varphi}(Q)$ (see \cite{Gossez1974}).\\
Finally we obtain as $h\rightarrow 0$
%$$\frac{1}{2}(\|p(u(T))\|_{2,\Omega}-\|p(u^{0})\|_{2,\Omega})+p_{0}\int_{Q}\alpha\eta dxdt$$
\begin{equation}\label{eqTheo11}
\begin{array}{ll}
      \displaystyle \frac{1}{2}(\|b(u(T))\|_{2,\Omega}-\|b(u^{0})\|_{2,\Omega})+b_{0}\int_{Q}\alpha \nabla b(u)dxdt \\
     \displaystyle +\int_{Q}K( u)\nabla b(u)dxdt= \int_{Q}fb(u)dxdt,
 \end{array}
\end{equation}
and from \eqref{eqTheo6}, \eqref{eqTheo11} and \eqref{eqb}, we get for all $\eta\in (L^{\infty}(Q))^{d}$
$$0\leq \int_{Q}[b_{0}a(\eta)-(b_{1}-b_{0})\alpha](\eta-\nabla u)dxdt=b_{0}\int_{Q}(a(\eta)-\alpha)(\eta-\nabla u)dxdt.$$
Following the modification of Minty's trick in  \cite{Gwiazda2010Math.Methods.Appl}, we set $Q_{k}=\{(x,t):|\nabla u(x,t)|>k\}$ for any $k\in\NN$. For arbitrary $i,j\in\NN$ with $j<i$, arbitrary $\lambda>0$, and arbitrary $\zeta\in (L^{\infty}(Q))^{d}$, we take\\
$$\eta=\nabla u \chi_{Q\backslash Q_{i}}+\lambda\zeta \chi_{Q\backslash Q_{j}}=\left\{\begin{array}{ll}
                                                            0  \quad \mbox{in}\quad  Q_{i},\\
                                                            \nabla u \quad \mbox{in}\quad Q_{j}\backslash Q_{i},\\
                                                            \nabla u+ \lambda\zeta \quad \mbox{in}\quad Q\backslash Q_{j}.
                                                          \end{array}%
                                                          \right.
$$
Thus
$$0\leq -\int_{Q_{i}}(a(0)-\alpha)\nabla udxdt+\lambda\int_{Q\backslash Q_{j}}(a(\nabla u +\lambda\zeta)-\alpha)\zeta dxdt.$$
Since $(a(0)-\alpha)\nabla u\in L^{1}(Q)$, we have
$$\int_{Q_{i}}(a(0)-\alpha)\nabla udxdt\rightarrow 0 \quad\mbox{as}\quad i\rightarrow\infty,$$
 then
 $$0\leq \lambda\int_{Q\backslash Q_{j}}(a(\nabla u +\lambda\zeta)-\alpha)\zeta dxdt.$$

On the other hand since $a$ is  monotone,  for $\lambda\in[0,1]$ it yields
$$a(\nabla u +\lambda\zeta)\zeta\leq a(\nabla u +\zeta)\zeta\in L^{1}(Q),$$
then by Dominated Convergence Theorem, we get
$$\int_{Q\backslash Q_{j}}(a(\nabla u +\lambda\zeta)-\alpha)\zeta dxdt\rightarrow \int_{Q\backslash Q_{j}}(a(\nabla u)-\alpha)\zeta dxdt \quad \mbox{as}\quad \lambda\rightarrow 0,$$
and thus
$$0\leq \int_{Q\backslash Q_{j}}(a(\nabla u)-\alpha)\zeta dxdt
\quad \mbox{for any}\ j\in\NN\quad \mbox{ and any}\ \xi\in (L^{\infty}(Q))^{d}.$$
The choice
$\left\{\begin{array}{ll}
\zeta=-\frac{a(\nabla u)-\alpha}{|a(\nabla u)-\alpha|}  \quad \mbox{if}\quad  a(\nabla u)\neq \alpha,\\
\zeta=0, \quad \mbox{otherwise}.
 \end{array}%
   \right.$
\\
allows us to get
$$\int_{Q\backslash Q_{j}}|a(\nabla u)-\alpha| dxdt\leq 0,$$
and thus $\alpha=a(\nabla u)$ almost everywhere in $Q\backslash Q_{j}$, since $j$ was arbitrary, this proves the equality almost everywhere in $Q$, which complete the proof.
\end{itemize}

\textbf{Step 4: Uniqueness}~\\
Let $u$ and $v$ be two  solutions to the problem with the same data $(u_{0},f)$. From the proof above, we have
$$\left\lbrace \begin{array}{ll}
 \displaystyle \int_{Q}(b(u)-b(v))\partial_{t}wdxdt+\int_{\Omega}(b(u(.,T))-b(v(.,T)))w(.,T)dx\\
\displaystyle +\int_{Q}(a(\nabla u)-a(\nabla v))\nabla wdxdt +\int_{Q}(K( u)-K( v))\nabla w dxdt=0\, \ \mbox{for all} \, \  w\in \mathcal{W}.
\end{array}%
\right.$$
Thus
\begin{equation}\label{eqUni1}
\begin{array}{ll}
\displaystyle \int_{Q}\partial_{t}(b(u)-b(v))wdxdt\leq & \displaystyle -\int_{Q}(a(\nabla u)-a(\nabla v))\nabla wdxdt\\
& \displaystyle -\int_{Q}(K( u)-K( v))\nabla w dxdt.
\end{array}
\end{equation}
For all $\overline{t}\in[0;T]$, taking $w=\frac{1}{k}T_{k}(b(u)-b(v))w_{\epsilon,\overline{t}}$ where
$$w_{\epsilon,\overline{t}}(t)=\left  \{\begin{array}{ll}
  1 \quad\mbox{if}\quad 0\leq t\leq \overline{t}-\epsilon,\\
  \frac{\overline{t}-t}{\epsilon}\quad\mbox{if}\quad \overline{t}-\epsilon <t\leq \overline{t},\\
  0 \quad\mbox{otherwise}.
\end{array}\right.$$
In the same way as the above
$$\lim_{k\rightarrow 0}\int_{Q}(K( u)-K( v))\nabla w dxdt=0,$$
and by the monotonicity of $a$ we obtain
$$\int_{Q}\partial_{t}(b(u)-b(v))sgn(b(u)-b(v))w_{\epsilon,\overline{t}}dxdt=\lim_{k\rightarrow 0}\int_{Q}\partial_{t}(b(u)-b(v))wdxdt\leq 0.$$
On the other hand, we have
\begin{align*}
\int_{Q}\partial_{t}(b(u)-b(v))& sgn(b(u)-b(v))w_{\epsilon,\overline{t}}dxdt\\
&=\int_{0}^{\overline{t}-\epsilon}\int_{\Omega}\partial_{t}(b(u)-b(v))sgn(b(u)-b(v))dxdt\\
&+\int_{\overline{t}-\epsilon}^{\overline{t}} \frac{\overline{t}-t}{\epsilon}\int_{\Omega}\partial_{t}(b(u)-b(v))sgn(b(u)-b(v))dxdt.
\end{align*}
Employing Dominated Convergence Theorem, as $\epsilon\rightarrow 0$ we get
$$0\geq\int_{0}^{\overline{t}}\int_{\Omega}\partial_{t}(b(u)-b(v))sgn(b(u)-b(v))dxdt=\int_{0}^{\overline{t}}\frac{d}{dt}\|b(u)-b(v)\|_{L^{1}(\Omega)}dt,$$
and thus
$$\|b(u(.,\overline{t}))-b(v(.,\overline{t}))\|_{L^{1}(\Omega)}=0\quad\mbox{for all }\quad \overline{t}\in (0;T],$$
and thus the uniqueness.
%If $u,v\in \mathcal{W}$ with $a(\nabla u), a(\nabla v), K( u), K( v)\in \mathcal{L}_{\overline{M}}(Q;\RR^{d})$, then (\ref{eqUni1})implies $\|p(u(.,t))-p(v(.,t)))\|_{2,\Omega}=0$ for all
%\end{proof}
\appendix
\section*{Appendix}
\textbf{Error estimate for the temporal semi-discretization}:\\
We just give the error estimate in the temporal semi-discretization case
since in the complete discretization is far from being easy to make. Let
$e_n = u (.,t_n)-u_n$ be the error between the exact solution and
numerical solution.
\begin{theorem}\label{TheoError}
    Let $u_{0},u^{0}\in L^{2}(\Omega)$, $f\in L^{1}(0,T;L^{2}(\Omega))$, $u,\partial_{t}u, \partial_{tt}^{2}u\in L^{1}(0,T;L^{2}(\Omega))$ with  $u(.,t)\in \mathcal{V}=\{v\in L^{2}(\Omega):\nabla v\in (\mathcal{L}_{\varphi}(\Omega))^{d}, \gamma_{0}u=0\}$, $a(\nabla u(.,t)), K(u(.,t))\in (\mathcal{L}_{\overline{\varphi}}(\Omega))^{d}$ for all $t\in[0,T]$. Let $u^{n}\in \mathcal{V}$ with $a(\nabla u^{n}), K(u^{n})\in  (\mathcal{L}_{\overline{\varphi}}(\Omega))^{d}$ be an approximation of $u(.,t_{n})$ such that $n=1,2,...,N,$
    $$  \int_{\Omega}\Big[\frac{b(u^{n})-b(u^{n-1})}{\tau}v+a(\nabla u^{n})\nabla v+K(u^{n})\nabla v\Big]dx=\int_{\Omega}f(.,t_{n})vdx\quad \mbox{for all}\quad v\in \mathcal{V}.$$
    Then for $n=1,2,...,N,$
        \begin{equation}\label{eqe0}
    \|e^{^{n}}\|_{L^{1}(\Omega)}\leq C(\|u_{0}-u^{0}\|_{L^{1}(\Omega)}+\tau\|\partial_{tt}^{2}u\|_{L^{1}(0,T;L^{2}(\Omega))}+\|\overline{f}-f\|_{L^{1}(\Omega)})
    \end{equation}
    where $\overline{f}$ denotes the piecewise constant in time interpolation of $f$ with respect to $(t_n)_{n=1}^{N}$.
    \end{theorem}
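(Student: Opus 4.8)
The plan is to derive an error equation by subtracting the discrete scheme from the exact weak formulation evaluated at $t=t_n$, then test with a regularized sign of the error $b(u(.,t_n))-b(u^n)$ and sum over the time levels, imitating the uniqueness argument of Step 4 but keeping track of the consistency defect of the backward Euler scheme. First I would write, for the exact solution at $t=t_n$,
$$\int_{\Omega}\Big[\partial_t b(u(.,t_n))\,v+a(\nabla u(.,t_n))\nabla v+K(u(.,t_n))\nabla v\Big]dx=\int_{\Omega}f(.,t_n)v\,dx,$$
and subtract \eqref{eqNumMethod}. Using the Taylor expansion already recalled in the paper,
$$\partial_t b(u(.,t_n))=\frac{b(u(.,t_n))-b(u(.,t_{n-1}))}{\tau}-\rho^n,\qquad \|\rho^n\|_{2,\Omega}\le C\tau\,\|\partial_{tt}^2 u\|_{L^1(t_{n-1},t_n;L^2(\Omega))}$$
(this is the standard integral form of the Taylor remainder, $\rho^n=\frac1\tau\int_{t_{n-1}}^{t_n}(s-t_{n-1})\partial_{tt}^2 u(.,s)\,ds$), the error equation reads, with $E^n:=b(u(.,t_n))-b(u^n)$,
$$\int_{\Omega}\Big[\frac{E^n-E^{n-1}}{\tau}v+(a(\nabla u(.,t_n))-a(\nabla u^n))\nabla v+(K(u(.,t_n))-K(u^n))\nabla v\Big]dx=\int_{\Omega}\big((f(.,t_n)-\overline f^{\,n})+\rho^n\big)v\,dx.$$

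Next I would test with $v=\frac1k T_k(E^n)$ and pass $k\to 0$, exactly as in the uniqueness proof: the first term tends to $\int_\Omega \frac{E^n-E^{n-1}}{\tau}\,\mathrm{sgn}(E^n)\,dx\ge \frac1\tau(\|E^n\|_{L^1(\Omega)}-\|E^{n-1}\|_{L^1(\Omega)})$; the $a$-term is nonnegative and thus dropped after a sign change by monotonicity \eqref{eqa2}; the $K$-term vanishes in the limit using the Lipschitz bound \eqref{eqPhi2} together with $\nabla(u(.,t_n)-u^n)\in L^1(\Omega)$, just as on the previous pages; and the right-hand side is bounded in absolute value by $\|f(.,t_n)-\overline f^{\,n}\|_{2,\Omega}+\|\rho^n\|_{2,\Omega}$ times $|\Omega|^{1/2}$, or more naturally by $\|f(.,t_n)-\overline f^{\,n}\|_{L^1(\Omega)}+\|\rho^n\|_{L^1(\Omega)}$. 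This yields the one-step estimate
$$\|E^n\|_{L^1(\Omega)}-\|E^{n-1}\|_{L^1(\Omega)}\le \tau\big(\|f(.,t_n)-\overline f^{\,n}\|_{L^1(\Omega)}+C\tau\|\partial_{tt}^2 u\|_{L^1(t_{n-1},t_n;L^2(\Omega))}\big).$$
Summing from $j=1$ to $n$ telescopes the left side to $\|E^n\|_{L^1(\Omega)}-\|E^0\|_{L^1(\Omega)}$, and $\sum_j \tau\|\partial_{tt}^2 u\|_{L^1(t_{j-1},t_j;L^2(\Omega))}=\tau\|\partial_{tt}^2 u\|_{L^1(0,T;L^2(\Omega))}$ while $\sum_j\tau\|f(.,t_j)-\overline f^{\,j}\|_{L^1(\Omega)}=\|\overline f-f\|_{L^1(Q)}$; finally $\|E^0\|_{L^1(\Omega)}=\|b(u_0)-b(u^0)\|_{L^1(\Omega)}\le b_1\|u_0-u^0\|_{L^1(\Omega)}$ by \eqref{eqb}, and $\|e^n\|_{L^1(\Omega)}=\|u(.,t_n)-u^n\|_{L^1(\Omega)}\le \frac1{b_0}\|E^n\|_{L^1(\Omega)}$ again by \eqref{eqb}. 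Collecting constants gives \eqref{eqe0}.

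The main obstacle, as in the uniqueness step, is the rigorous justification of the limit $k\to 0$ in the term $\frac1k\int_\Omega \partial_t\!\big(\text{discrete difference}\big)T_k(E^n)\,v$-type expressions and in the $K$-term: one must know that $T_k(E^n)/k\to \mathrm{sgn}(E^n)$ in a way compatible with the regularity available ($E^n\in L^2(\Omega)$, $\nabla(u(.,t_n)-u^n)\in L^1(\Omega)^d$), and that the integrand $\frac1k \nu_1|u(.,t_n)-u^n|\,|\nabla T_k(E^n)|$ is dominated by an $L^1$ function so that dominated convergence applies — this is precisely the computation performed in the uniqueness lemma and in Step 4, so I would simply invoke it. A secondary technical point is that the argument is carried out only for the temporal semi-discretization (the test function $v=\frac1k T_k(E^n)$ must lie in the admissible space $\mathcal V$, which it does since $E^n\in\mathcal V$ by the hypotheses on $u(.,t_n)$ and $u^n$); extending it to the fully discrete scheme would require $\frac1k T_k(E^n)\in V_m$, which fails, and this is exactly why the theorem is stated for the semi-discrete case only.
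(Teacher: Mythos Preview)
Your proposal is correct and follows essentially the same route as the paper: define $E^n=b(u(\cdot,t_n))-b(u^n)$ (the paper calls it $R^n$), derive the error equation with the integral Taylor remainder $\rho^n=\frac{1}{\tau}\int_{t_{n-1}}^{t_n}(s-t_{n-1})\partial_{tt}^2 b(u(\cdot,s))\,ds$, test with $\frac{1}{k}T_k(E^n)$, drop the $a$-term by monotonicity \eqref{eqa2}, kill the $K$-term via \eqref{eqPhi2} and dominated convergence, let $k\to 0$ to obtain the one-step $L^1$ inequality, telescope, and convert between $E^n$ and $e^n$ through \eqref{eqb}. The only cosmetic slips are a stray extra factor $\tau$ in your bound for $\|\rho^n\|$ (one has $\|\rho^n\|\le \|\partial_{tt}^2 u\|_{L^1(t_{n-1},t_n;L^2(\Omega))}$, not $C\tau$ times that) and the fact that the remainder naturally involves $\partial_{tt}^2 b(u)$ rather than $\partial_{tt}^2 u$; neither affects the argument or the final estimate.
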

%%%%%%%%%%%%%%%%%%%%%%%Proof%%%%%%%%%%%%%%%%%%%%%%%
\begin{proof}
%\textbf{Proof}\\
Let $R^{n}=b(u(.,t_{n}))-b(u^{n})$ then
$$ \int_{\Omega}\frac{R^{n}-R^{n-1}}{\tau}vdx=\int_{\Omega}\frac{b(u(.,t_{n}))-b(u(.,t_{n-1}))}{\tau}vdx-\int_{\Omega}\frac{b(u^{n})-b(u^{n-1})}{\tau}vdx.$$
By integrate par parts and\eqref{eqNumMethod} we have $$\int_{\Omega}\frac{R^{n}-R^{n-1}}{\tau}vdx=\int_{\Omega}\partial_{t}b(u(.,t_{n}))vdx-\frac{1}{\tau}\int_{t_{n-1}}^{t_{n}}\int_{\Omega}(t-t_{n-1})\partial_{tt}b(u(.,t))vdxdt$$
$$\hspace{-2cm}-\int_{\Omega}\frac{b(u^{n})-b(u^{n-1})}{\tau}vdx$$
$$\hspace{2cm}=\int_{\Omega}(f(.,t_{n})-f_{n})vdx -\int_{\Omega}(a(\nabla u(.,t_{n}))-a(\nabla u^{n}))vdx$$
\begin{equation}\label{eqR1}
-\int_{\Omega}(K( u(.,t_{n}))-K( u^{n}))vdx-\frac{1}{\tau}\int_{t_{n-1}}^{t_{n}}\int_{\Omega}(t-t_{n-1})\partial_{tt}b(u(.,t))vdxdt
\end{equation}
%$$ \int_{\Omega}\frac{R^{n}-R^{0}}{\tau}vdx\leq \sum_{n=1}^{N}\int_{\Omega}(f(.,t_{n})-f_{n})vdx -\sum_{n=1}^{N}\int_{\Omega}(a(\nabla u(.,t_{n}))-a(\nabla u^{n}))vdx$$
%\begin{equation}\label{eqR1}
%-\sum_{n=1}^{N}\int_{\Omega}(K(u(.,t_{n}))-K(u^{n}))vdx-\sum_{n=1}^{N}\frac{1}{\tau}\int_{t_{n-1}}^{t_{n}}\int_{\Omega}(t-t_{n-1})\partial^{2}_{tt}p(u(.,t_{n}))vdxdt
%\end{equation}
Taking $v_{k}=\frac{1}{k}T_{k}(R^{n})$ and using \eqref{eqa2}, we have
$$|\int_{\Omega}(f(.,t_{n})-f_{n})vdx|\leq\|f(.,t_{n})-f_{n}\|_{L^{1}(\Omega)} $$
$$-\int_{\Omega}(a(\nabla u(.,t_{n}))-a(\nabla u^{n}))\nabla v dx\leq -p_{0}\int_{\{|R^{n}|\leq k\}}(a(\nabla u(.,t_{n}))-a(\nabla u^{n}))$$
$$\hspace{3cm}\times\nabla(u(.,t_{n})-u^{n})dx\leq0$$
%$$|\int_{\Omega}(K( u(.,t_{n}))-K( u^{n}))vdx|\leq p_{1}\|L(.,.)\|_{L^{\infty}(Q)}\|e^{n}\|_{2,\Omega}^{2}$$
$$\frac{1}{\tau}\int_{t_{n-1}}^{t_{n}}\int_{\Omega}(t-t_{n-1})\partial^{2}_{tt}b(u(.,t))vdxdt\leq \frac{1}{\tau}\int_{t_{n-1}}^{t_{n}}\int_{\Omega}(t-t_{n-1})|\partial^{2}_{tt}b(u(.,t))|dxdt,$$
and
$$|\int_{\Omega}(K( u(.,t_{n}))-K( u^{n}))vdx|\leq \frac{p_{0}\nu_{1}}{k}\int_{\{|R^{n}|\leq k\}}|u(.,t_{n})- u^{n}||\nabla u(.,t_{n})- \nabla u^{n}|dx$$
$$\hspace{3cm}\leq \frac{p_{0}\nu_{1}}{k}\int_{\{|R^{n}|\leq k\}}|\nabla u(.,t_{n})- \nabla u^{n}|dx.$$
%Since $\{x\in\Omega :|R^{n}|\leq k \}\subset \{x\in\Omega :|u(.,t_{n})-  u^{n}|\leq k \}$ and
Since $|\nabla u(.,t_{n}))- \nabla u^{n}|\in L^{1}(\Omega)$,
 we pass to the limit as $k\rightarrow +\infty$, and we have
$$\lim_{k\rightarrow +\infty}\int_{\Omega}(K( u(.,t_{n}))-K( u^{n}))vdx=0,$$
and also
$$ \lim_{k\rightarrow +\infty}\int_{\Omega}\frac{R^{n}-R^{n-1}}{\tau}v_{k}dx=\int_{\Omega}\frac{R^{n}-R^{n-1}}{\tau}sgn(R^{n})dx.$$
%Using $(a-b)a\geq\frac{1}{2}(a^{2}-b^{2})$ then
%$$\frac{1}{2\tau}(\|R^{n}\|_{2,\Omega}^{2}-\|R^{n-1}\|_{2,\Omega}^{2})\leq  2p_{1}\|f(.,t_{n})-f_{n}\|_{2,\Omega}\|e^{n}\|_{2,\Omega}+ p_{1}\|L(.,.)\|_{L^{\infty}(Q)}\|e^{n}\|_{2,\Omega}^{2}$$
%$$+2p_{1}\int_{t_{n-1}}^{t_{n}}\|\partial^{2}_{tt}p(u(.,t))\|_{2,\Omega}dt\|e^{n}\|_{2,\Omega}$$
Recalling \eqref{eqR1}, we obtain
$$\int_{\Omega}\frac{|R^{n}|}{\tau}dx\leq \|f(.,t_{n})-f_{n}\|_{L^{1}(\Omega)} +\frac{1}{\tau}\int_{t_{n-1}}^{t_{n}}\int_{\Omega}(t-t_{n-1})|\partial^{2}_{tt}b(u(.,t))|dxdt +\int_{\Omega}\frac{|R^{n-1}|}{\tau}dx.$$
Thus summation $n=1,...,N$ and using \eqref{eqb}
$$b_{0}\|e^{n}\|_{L^{1}(\Omega)}\leq\tau \sum_{n=1}^{N}\|f(.,t_{n})-f_{n}\|_{L^{1}(\Omega)} +2\tau\int_{0}^{T}|\partial^{2}_{tt}b(u(.,t))|_{L^{1}(\Omega)}dt +p_{1}\|e^{0}\|_{L^{1}(\Omega)}.$$
Thus, together with the estimate $\tau \sum_{n=1}^{N}\|f(.,t_{n})-f_{n}\|_{L^{1}(\Omega)}\leq c\|\overline{f}-f\|_{L^{1}(\Omega)}$,
and $L^{1}(0,T;L^{2}(\Omega))\subset L^{1}(0,T;L^{1}(\Omega))$, we deduce \eqref{eqe0}.
\end{proof}
\textbf{Comments}: The lack of regularity results for weak solutions in Museilak spaces makes the task of showing convergence results for such regular solutions very difficult.

%%%%%%%%%%%%%%%%%%%%%%%%%%%%%%%%%%%%%%%%%%%%
% Non-BibTeX users please use

\end{document}